\newtheorem{theorem}{Theorem}[section]
\newtheorem{lemma}{Lemma}[section]
\theoremstyle{definition}
\newtheorem{definition}{Definition}[section]
\newtheorem{remark}{Remark}[section]
\numberwithin{equation}{section}
\DeclareMathOperator{\supp}{supp}
\DeclareMathOperator{\essinf}{ess\, inf}
\newcommand\R{\mathbb R}
\newcommand\C{\mathbb C}
\newcommand\Z{\mathbb Z}
\newcommand\btau{\boldsymbol{\tau}}
\newcommand\bdeta{\boldsymbol{\eta}}
\newcommand{\cA}{\mathcal{A}}
\newcommand{\bom}{\boldsymbol{\omega}}
\newcommand\eps{\varepsilon}
\renewcommand\P{\mathbb P}
\newcommand\E{\mathbb E}
\newcommand{\abs}[1]{\left\lvert #1 \right\rvert}
\newcommand{\set}[1]{\left\{ #1 \right\}}
\newcommand{\pa}[1]{\left( #1 \right)}
\newcommand{\eq}[1]{\eqref{#1}}
\newcommand\beq{\begin{equation}}
\newcommand\eeq{\end{equation}}
\newcounter{numcount}
\newcommand{\labelnummer}{\mbox{\normalfont (\roman{numcount})}}%
\newenvironment{nummer}%
  {\let\curlabelspeicher\@currentlabel%
    \begin{list}{\labelnummer}%
      {\usecounter{numcount}\leftmargin0pt%
        \topsep0.5ex\partopsep2ex\parsep0pt\itemsep0ex\@plus1\p@%
        \labelwidth2.5em\itemindent3.5em\labelsep1em%
      }%
    \let\saveitem\item%
    \def\item{\saveitem%
      \def\@currentlabel{\curlabelspeicher$\,$\labelnummer}}%
    \let\savelabel\label%
    \def\label##1{\savelabel{##1}%
      \@bsphack%
        \ifmmode\else%
          \protected@write\@auxout{}%
          {\string\newlabel{##1item}{{\labelnummer}{\thepage}}}%
        \fi%
      \@esphack%
    }%
  }{\end{list}}%
\begin{document}

\title[Bernoulli Decompositions for Random Variables]
{On Bernoulli Decompositions  \\ for Random Variables, Concentration Bounds, \\ and Spectral Localization}

\author{Michael Aizenman} %
             \address[Aizenman]{     Princeton University, Departments of Mathematics and Physics,
        Princeton, NJ 08544, USA }
             \email{aizenman@princeton.edu}

\author{Fran\c cois Germinet}
\address[Germinet]{ Universit\'e de Cergy-Pontoise, Laboratoire AGM, UMR CNRS 8088,
D\'epartement de Math\'ematiques,
Site de Saint-Martin,
2 avenue Adolphe Chauvin,
95302 Cergy-Pontoise cedex, France}
 \email{germinet@math.u-cergy.fr}

\author{Abel Klein}
\address[Klein]{University of California, Irvine,
Department of Mathematics,
Irvine, CA 92697-3875,  USA}
 \email{aklein@uci.edu}   
 
\author{Simone Warzel} %
\address[Warzel]{Princeton University, Department of Mathematics,
             Princeton, NJ 08544, USA}%
\email{swarzel@princeton.edu}%

\begin{abstract} 
As was noted already by A. N. Kolmogorov,  any random variable has a Bernoulli component.  This observation  provides a tool for the extension of results which are known for Bernoulli random variables  to arbitrary distributions.     Two applications are provided here: {\em i.\/}  an anti-concentration bound for a class of functions of independent random variables, where probabilistic bounds  are extracted from combinatorial results, and  
{\em ii.\/}  a proof, based on the Bernoulli case, of spectral localization for random Schr\"odinger operators with arbitrary probability distributions  for the single site coupling constants. 
For a general random variable, the  Bernoulli component may be defined so that its conditional variance is uniformly positive.  The natural maximization problem  is an optimal transport question which is also addressed here.  
\end{abstract}

\date{June 29, 2007}

\maketitle

\tableofcontents

\newpage


\section{Introduction}

This article has a twofold purpose.  As a general observation it is noted that in any random variable one may find a Bernoulli component.  A decomposition which is based on the above observation allows then to extend 
 results which for systems of Bernoulli variables are available by combinatorial methods  to systems of random variables of arbitrary  distribution.  
 
 A Bernoulli decomposition of a real-valued random variable $X$ is a representation of  the form
\begin{equation} \label{eq:t0}
X \ \stackrel{\mathcal D}{=}  \ Y(t) \  + \ \delta (t)  \  \eta  
\, , 
\end{equation} 
where $Y(\cdot)$ and  $ \delta (\cdot) \ge 0 $ are functions  on $(0,1)$, 
the variable $t$  is uniformly distributed  in $(0,1)$, and   
$\eta$ is a Bernoulli random variable taking the values $\{ 0, 1\}$ with
probabilities $\{1-p, p\}$ independently of $t$.  The relation in \eqref{eq:t0} is to be understood as expressing equality of the distributions of the corresponding random variables. 

Bernoulli decompositions are constructed here for arbitrary random variables  of non-degenerate 
distributions.  
For certain purposes it is useful to have 
positive uniform conditional variance of the Bernoulli term, i.e.,  
\beq \inf_{t\in (0,1)} \delta(t) > 0 \,.  
\eeq    
We present such a representation below and discuss related issues of  optimality.

Two applications mentioned here: {\em i.\/} anti-concentration bounds for monotone, though not necessarily linear, functions of independent random variables, and  {\em ii.\/} a proof, based on the Bernoulli case \cite{BK}, of spectral localization for random Schr\"odinger operators with arbitrary probability distributions  for the single site coupling constants. 

In the first application, we consider  functions 
$ \Phi(X_1,\ldots, X_N)$  of independent non-degenerate random variables $\{X_j\}$ whose distributions  are either identical  or, in a sense explained below, are of widths greater than some common $b_X >0$.    
It is shown here that if for some $\varepsilon >0$ the function satisfies 
\begin{equation}\label{condFint}
\Phi(\boldsymbol{u} + v \boldsymbol{e}_j)-\Phi(\boldsymbol{u})  \  > \   \varepsilon  
  \end{equation}
for all $v \ge b_X$, all $\boldsymbol{u}  \in \R^N$,  and $j=1, \ldots,N$,   
where $ \boldsymbol{e}_j $ is the  unit vector in the $j$-direction, then the following concentration bound applies: 
    \beq \label{concestint}
\sup_{x\in \R} \, \P\left(  \set{\Phi(X_1,\ldots,X_N) \in [x, x+\varepsilon] } \right)  \ \le\ \frac{C_X}{\sqrt{N}} \,  ,
\eeq
with a constant $ C_X < \infty $ which depends on the uniform bounds on the distributions of $\{X_j\}$.  
The proof employs the Bernoulli representation along with the combinatorial  bounds of Sperner~\cite{S}, and the more general LYM lemma~\cite{E}.  

The use of combinatorial estimates for concentration bounds first appeared in the context of Bernoulli variables in 
P. Erd\"os'  variant of the Littlewood-Offord Lemma~\cite{Er}.  
The presence of a Bernoulli component in any random variable was noted  implicitly in the work of A. N.  Kolmogorov~\cite{Kol58}  where it was put to use in an improvement of the earlier concentration bounds of W. Doeblin and P. L\'evy~\cite{DL36,Doe39}  on linear functions of independent random variables.    
Initially, Kolmogorov did not extract the maximal benefit from the method by not connecting it with 
Sperner theory, and in particular the concentration bound in \cite{Kol58} includes an  unnecessary logarithmic factor; the corresponding improvement was made by B. A. Rogozin~\cite{Rog61}.  
The bounds  were further improved in a series of works, in particular \cite{Ess68,Kes69,Rog73} 
where use was also made of other methods.  
One may  note  here that 
perhaps quite naturally a general method like the Bernoulli decomposition is not optimized for specific applications.   
Nevertheless, it has the benefit of providing a simple perspective on a number of topics.

In our second application, we establish spectral localization 
for a broad class of continuum,  alloy-type random Schr\"odinger operators (cf. \eqref{Hom}), building on a result of J. Bourgain and C. Kenig~\cite{BK} for the Bernoulli case.   
The model and the results are  presented more explicitly in Section~\ref{sec:loc}.  
The main point to be made here is that the understanding of  spectral localization for the Bernoulli case can be extended through the Bernoulli decomposition to random operators with single site coupling parameters of arbitrary distribution (cf. Theorem~\ref{thm:gen_loc}).

\section{Bernoulli decompositions for random variables}
\label{sec:rep}

Randomness often is in the eyes of the beholder, as probability
measures are used to express averages over specified sets of rather
varied nature.  However, it may be true that the  most elementary
model underlying the basic popular perception of probability is the simple
 `coin toss', with two possible outcomes:  heads or tails, which  is modeled by a Bernoulli random variable:   a binary variable   equal to  $1$ with probability $p$ and equal  to $0$ with probability $1-p$.

\subsection{The decomposition in two variants} 
The following statement assert that any real
valued random variable has   a Bernoulli component, which can even
be chosen to be of uniformly positive variance. 

Given a real random variable  $X$ by default we shall 
denote its  probability distribution by $\mu $ 
and let $G: (0,1) \to (-\infty,\infty) $  be the function defined by
\begin{equation} \label{eq:G}
G(t) \ := \ \inf \{ u\in \R \, : \, \mu\left( (-\infty, u] \right)
\ge t \}  \, . 
\end{equation}
One may observe that $G $ is the `inverse' distribution function 
of $\mu$, which takes values in the essential range of $X$. 
It can alternatively be described by  
\beq  \label{eq:G2}
G(t)\le u \quad  \Longleftrightarrow \quad \mu \left( (-\infty, u]\right)  
\ge t \, ,
\eeq  
and satisfies
$ \mu \left( (-\infty, G(t) - \eps]\right) < t \le \mu \left( (-\infty, G(t) ]\right) $ for all $t \in \R$ and $\eps>0$.


\begin{theorem} \label{thm:rep_t+}
Let $X$ be a non-degenerate real-valued random variable with a 
probability distribution $\mu $.
Then, for each $p\in (0,1)$,  $X$ admits a decomposition of
the form:
\begin{equation} \label{eq:t+}
X \ \stackrel{\mathcal D}{=}  \ Y_{p}(t) \  + \ \delta_{p}^+ (t)  \  \eta  
\, , 
\end{equation} 
in the sense of equality of the corresponding probability
distributions, where:
    \begin{enumerate}
\item   $\eta$ and $t$ are independent random variables, with
$\eta$ a binary variable taking the values $\{ 0, 1\}$ with
probabilities $\{1-p, p\}$, correspondingly, and
$t$ having   the  uniform distribution  in $(0,1)$,
\item
$Y_{p}: \, (0,1) \mapsto (-\infty ,\infty)$  
is the monotone non-decreasing function  
\beq 
Y_{p}(t) \   := \   G( (1-p) t ) \, ,  
\eeq    
     \item  
$\delta_{p}^+: \, (0,1) \mapsto [0,\infty)$  is the  function  
\beq \label{eq:ZZZ}
\delta_{p}^+(t) \  := \   G(1-p+pt) - G((1-p)t )  \, ,  
\eeq
\item for  at least one value of $ p \in (0,1)$ we have
   \begin{equation}\label{infdelta}
\beta^+(p,\mu)  \  :=  \inf_{t\in (0,1)} \delta_{p}^+(t) > 0\, .
\end{equation}
\end{enumerate}
\end{theorem}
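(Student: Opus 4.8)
\emph{Proof strategy.} The plan is to dispatch the decomposition \eqref{eq:t+} together with (1)--(3) as a routine property of the quantile function, and to spend the real effort on (4).

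For the decomposition I would use the fact --- immediate from \eqref{eq:G2} --- that if $U$ is uniform on $(0,1)$ then $G(U)$ has law $\mu$, since $\P(G(U)\le u)=\P(U\le\mu((-\infty,u]))=\mu((-\infty,u])$. Taking $t$ uniform on $(0,1)$ and $\eta$ an independent Bernoulli$(p)$ variable, condition on $\eta$: on $\{\eta=0\}$ (probability $1-p$) the right-hand side of \eqref{eq:t+} is $Y_p(t)=G((1-p)t)$ with $(1-p)t$ uniform on $(0,1-p)$, while on $\{\eta=1\}$ (probability $p$) the definition \eqref{eq:ZZZ} telescopes to $Y_p(t)+\delta_p^+(t)=G(1-p+pt)$ with $1-p+pt$ uniform on $(1-p,1)$. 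Hence the law of the right-hand side of \eqref{eq:t+} is the $(1-p,p)$-mixture of the laws of $G$ at uniform points of $(0,1-p)$ and of $(1-p,1)$, which is exactly the law of $G(U)$, namely $\mu$. Monotonicity of $G$ together with $1-p+pt\ge(1-p)t$ gives that $Y_p$ is non-decreasing and $\delta_p^+\ge0$, so (1)--(3) and \eqref{eq:t+} hold for every $p\in(0,1)$.

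For (4), write $\delta_p^+(t)=G(1-p+pt)-G((1-p)t)$ and note that for each $t\in(0,1)$ the two arguments straddle $1-p$, i.e.\ $(1-p)t<1-p<1-p+pt$, and differ by at least $\min(p,1-p)$. Non-degeneracy of $\mu$ is equivalent to $G$ being non-constant, i.e.\ $\essinf X<\esss X$. I would split into two cases. If $F$ is constant on some non-trivial interval --- equivalently $\supp\mu$ is not an interval --- then $G$ has a genuine jump at some level $\lambda\in(0,1)$, and with $1-p=\lambda$ monotonicity of $G$ alone gives, for \emph{every} $t\in(0,1)$,
\[
\delta_p^+(t)\ \ge\ \lim_{s\downarrow\lambda}G(s)\,-\,G(\lambda)\ >\ 0,
\]
so $\beta^+(1-\lambda,\mu)>0$ with nothing further to prove. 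In the complementary case $G$ is continuous, so $t\mapsto\delta_p^+(t)$ is continuous on $(0,1)$ and extends continuously to $[0,1]$ with endpoint values $G(1-p)-\essinf X$ and $\esss X-G(1-p)$ (read as $+\infty$ when $\mu$ is unbounded). It then suffices to choose $p$ so that $\delta_p^+$ is strictly positive on all of $[0,1]$; compactness then yields $\beta^+(p,\mu)>0$. Strict positivity at an interior $t$ fails only if $[(1-p)t,1-p+pt]$ lies inside a plateau of $G$ (i.e.\ inside an atom of $\mu$); positivity at $t=0,1$ requires $\mu(\{\essinf X\})<1-p<1-\mu(\{\esss X\})$. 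When every atom of $\mu$ has mass less than $\tfrac12$ one may take $p=\tfrac12$ (then no plateau is long enough and the extreme atoms have mass $<\tfrac12$); when there is an atom of mass $w\ge\tfrac12$ at a point $c$, with $\ell:=\mu((-\infty,c))$, there is at most this one plateau of length $\ge\tfrac12$, and one picks $1-p$ just outside $(\ell,\ell+w]$ --- possible since $\ell+(1-\ell-w)=1-w>0$ --- using that $\supp\mu$ is an interval to see that such a $1-p$ can also be kept off the extreme atoms.

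The decomposition is the routine part. The crux is (4) in the continuous case: one must exhibit a single $p$ for which $\delta_p^+$ stays bounded away from $0$, which forces the bookkeeping above of the plateaus of $G$ near $1-p$, and --- the most delicate point --- one must secure strict positivity in the limits $t\to0$ and $t\to1$, where $\delta_p^+$ tends to $G(1-p)-\essinf X$ and to $\esss X-G(1-p)$; this is exactly where the non-degeneracy hypothesis, through the separation of $1-p$ from the extreme plateaus of $G$, is essential. The passage from ``positive for each $t$'' to ``infimum positive'' is then just compactness of $[0,1]$ together with continuity of $G$.
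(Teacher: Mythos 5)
Your construction of the decomposition itself is correct and is the same ``chasing Pac-Men'' choice as the paper's (condition on $\eta$, note that $(1-p)t$ and $1-p+pt$ are uniform on $(0,1-p)$ and $(1-p,1)$, and use $G(U)\stackrel{\mathcal D}{=}X$); likewise your case~1 (an interior gap in $\supp\mu$, i.e.\ a jump of $G$ at a level $\lambda\in(0,1)$, with $1-p=\lambda$) and your sub-case $p=\tfrac12$ when every atom has mass $<\tfrac12$ are sound, and the compactness scheme (interior positivity plus positivity of the two endpoint limits plus continuity of $G$ on $(0,1)$) is a legitimate alternative to the paper's argument, which instead runs through the arrival/departure times $T_1>T_2$ and the explicit choice $1-p=\mu\left((-\infty,\hat x)\right)$ of its Remark.

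The genuine gap is in the remaining sub-case (an atom of mass $w\ge\tfrac12$). Your prescription ``pick $1-p$ just outside $(\ell,\ell+w]$, kept off the extreme atoms'' does not secure your own interior-positivity criterion: once $p\ne\tfrac12$ the intervals $[(1-p)t,\,1-p+pt]$ have length as small as $\min(p,1-p)$, so plateaus of mass $<\tfrac12$ are no longer harmless, and avoiding only the big plateau and the endpoint inequalities $\mu(\{\essinf X\})<1-p<1-\mu(\{\esss X\})$ does not prevent $1-p$ from sitting inside another plateau in a fatal way. Concretely, take $\mu$ on $[0,1]$ with mass $0.001$ spread on $(0,0.1)$, an atom of mass $0.489$ at $0.1$, mass $0.01$ spread on $(0.1,1)$, and an atom of mass $0.5$ at $1$; here $\ell=0.5$, $w=0.5$, $\supp\mu=[0,1]$, $G$ is continuous, and the choice $1-p=0.45$ satisfies all the conditions you state, yet for every $t$ with $0.001/0.45\le t\le (0.49-0.45)/0.55$ both $(1-p)t$ and $1-p+pt$ lie in the plateau $[0.001,0.49]$, so $\delta_p^+(t)=0$ and $\beta^+(p,\mu)=0$. (If ``just outside'' is meant as ``sufficiently close to $\ell$'', that stronger claim may be salvageable, but it is neither stated nor proved, and it is exactly the point at issue.) The clean repair is to require $1-p$ to lie outside the \emph{interior of every} constancy interval of $G$: then every $[(1-p)t,1-p+pt]$, being a two-sided neighbourhood of $1-p$, meets two distinct values of $G$, and such a level exists in $(\mu(\{\essinf X\}),\,1-\mu(\{\esss X\}))$ because the plateau interiors are disjoint open intervals which cannot cover that interval without a single plateau containing it, forcing a gap in $\supp\mu$. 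This is in effect the paper's choice $1-p=\mu\left((-\infty,\hat x)\right)$ for a suitable support point $\hat x$, which avoids the case analysis and the length bookkeeping altogether.
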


Some explicit expressions for $\beta^+(p,\mu)$ are mentioned in Remark~\ref{rem1} below.   The Bernoulli component of the measure is not a uniquely defined notion, 
and other representations similar to \eqref{eq:t+} but 
with different 
distributions for the conditional variance of the Bernoulli component, 
i.e.,  for $\delta(t)$,  can also be obtained.   In the following construction 
its uniform positivity  may be lost but one gains the 
feature that the range of values which  $\delta$ assumes   
reaches up to the diameter of the support of the measure $\mu$. 

\begin{theorem} \label{thm:rep_t1}
Let $X$ be a non-degenerate real-valued random variable with 
probability distribution $\mu $.
Then, for each $p\in (0,1)$, $X$ admits a decomposition
of the form:
\begin{equation} \label{eq:t}
X \ \stackrel{\mathcal D}{=}  \ Y_p(t) \  + \ \delta_{p}^-(t)  \  \eta
\end{equation}
where $t$, $\eta$ and the function $Y_{p}$ are as in 
Theorem~\ref{thm:rep_t+},   satisfying the above (1) and (2), but instead of 
(3) and (4)  the following holds   
    \begin{enumerate}
  \item[(3')]  $\delta_{p}^-: \,  (0,1) \mapsto [0,\infty)$  is the non-increasing function:
\beq \label{eq:del}
\delta_{p}^-(t) \  :=   G(1-pt) - G((1-p)t )\, ,
\eeq
\item[(4')]  for any $ x_- <x_+$ and  $p_\pm >0$  such that
\begin{equation}\label{condp1}
\P \left( \set{X  \le x_- }\right )\ge     p_-  \quad\mbox{and}\quad   
\P \left( \set{X  > x_+} \right )   >  p_+   \,   , 
\end{equation}
at the particular value  $p=\frac{p_+}{ p_{-} + p_{+}} $ we have 
\beq  \label{eq:JB} 
\P_t \left( \set{\delta_{p}^-(t)    >    x_{+} -  x_{-} }\right ) \ \ge\ p_{-} + p_{+} \, , 
\eeq 
where the probability is with respect to the uniform random variable $t$.
\end{enumerate}
\end{theorem}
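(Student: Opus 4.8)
The plan is to establish two things: the distributional identity \eqref{eq:t} together with the elementary properties (1), (2), (3'), and then the estimate (4').

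For the distributional identity the shortest route is to reduce it to Theorem~\ref{thm:rep_t+}. On the event $\set{\eta=0}$ the right-hand sides of \eqref{eq:t+} and \eqref{eq:t} both equal $Y_p(t)=G((1-p)t)$, while on $\set{\eta=1}$ they equal $G(1-p+pt)$ and $G(1-pt)$, respectively. Since $t$ is uniform on $(0,1)$ and independent of $\eta$, both $1-p+pt$ and $1-pt$ are uniformly distributed on $(1-p,1)$, so the two right-hand sides have the same conditional law given $\eta$; hence \eqref{eq:t} follows from \eqref{eq:t+}. (A self-contained variant: by \eqref{eq:G2}, $G$ maps the uniform law on $(0,1)$ to $\mu$; the variable equal to $(1-p)t$ on $\set{\eta=0}$ and to $1-pt$ on $\set{\eta=1}$ is, after a short computation of its distribution function, uniform on $(0,1)$; and the right side of \eqref{eq:t} is $G$ of this variable, hence has law $\mu$.) Property (3') is immediate from the monotonicity of $G$: for $t\in(0,1)$ one has $(1-p)t<1-pt$, so $\delta_p^-(t)\ge 0$, and as $t$ increases $(1-p)t$ increases while $1-pt$ decreases, so $\delta_p^-$ is nonincreasing.

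For (4') the value $p=\frac{p_+}{p_-+p_+}$ is chosen precisely so that $(1-p)(p_-+p_+)=p_-$ and $p(p_-+p_+)=p_+$; note also that $p_-+p_+<1$, since $\set{X\le x_-}$ and $\set{X>x_+}$ are disjoint while $\P(\set{X\le x_-})\ge p_-$ and $\P(\set{X>x_+})>p_+$. The claim reduces to the pointwise assertion that $\delta_p^-(t)>x_+-x_-$ for every $t\in(0,\,p_-+p_+]$; integrating over the uniform $t$ then gives $\P_t(\set{\delta_p^-(t)>x_+-x_-})\ge p_-+p_+$, which is \eqref{eq:JB}. For such $t$ it suffices, by subtraction, to show $G((1-p)t)\le x_-$ and $G(1-pt)>x_+$. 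Both follow from \eqref{eq:G2}: first, $(1-p)t\le(1-p)(p_-+p_+)=p_-\le\mu((-\infty,x_-])$ gives $G((1-p)t)\le x_-$; second, $pt\le p(p_-+p_+)=p_+<\P(\set{X>x_+})=1-\mu((-\infty,x_+])$ gives $\mu((-\infty,x_+])<1-pt$, which by \eqref{eq:G2} is exactly the failure of $G(1-pt)\le x_+$, i.e.\ $G(1-pt)>x_+$.

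The only subtlety, and the place I expect the argument to need care, is the bookkeeping of strict versus non-strict inequalities: (4') demands the \emph{strict} bound $\delta_p^-(t)>x_+-x_-$, and this is obtained by combining the non-strict bound $G((1-p)t)\le x_-$ (only $\P(\set{X\le x_-})\ge p_-$ is assumed) with the strict bound $G(1-pt)>x_+$ (which uses the strict hypothesis $\P(\set{X>x_+})>p_+$ in \eqref{condp1}). Everything else—the uniformity of the spliced variable, the monotonicity of $\delta_p^-$, and the inclusion of the endpoint $t=p_-+p_+$—is routine once the characterization \eqref{eq:G2} is in hand.
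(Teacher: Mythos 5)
Your proof is correct and follows essentially the same route as the paper: the self-contained variant (the spliced variable $(1-p)t$ on $\set{\eta=0}$, $1-pt$ on $\set{\eta=1}$ is uniform, and $G$ pushes the uniform law to $\mu$) is exactly the paper's marginal computation \eqref{eq:marginal}, and your verification of (4') via \eqref{eq:G2} on $t\le p_-+p_+$ is the paper's argument that \eqref{condp1} is equivalent to $G(p_-)\le x_-$ and $G(1-p_+)>x_+$. Your alternative reduction to Theorem~\ref{thm:rep_t+} is also fine (that theorem's proof does not rely on this one), and your strict/non-strict bookkeeping is exactly right.
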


In the proofs we employ two versions of  what is called here the
\emph{Pac-Man} algorithm for the construction of a joint distribution
$\rho $ of a pair of random variables, of the form $\{Y_1(t),
Y_2(t)\}$, whose marginal probability measures, 
$ \rho_1 $, $ \rho_2$, satisfy  
\begin{equation}  \label{eq:margin}
\mu \ = \  (1-p) \, \rho_1 \ +\  p \, \rho_2  \, .
\end{equation}
The representations \eqref{eq:t+} and \eqref{eq:t} correspond to letting:
\begin{eqnarray} \label{eq:delta}
     Y_p(t)  & := &  Y_1(t)  \nonumber  \\
      \delta_{p}^{\pm}(t)  & := &  Y_2(t) - Y_1(t)  \, .
     \end{eqnarray}
The two Theorems will be proven in reverse order. 

\begin{proof}[Proof of Theorem~\ref{thm:rep_t1}:]
We start by recalling the known observation that for any continuous
function $\phi \in C(\R)$:
\begin{equation} \label{eq:G-mu}
     \int_0^1 \phi(G(s)) \, ds \  = \   \int_{\R} \phi(x) d\mu(x)  \  \, .
     \end{equation}
This relation allows to represent, in terms similar to
\eqref{eq:t},  as
\begin{equation}  \label{eq:XG}
X \  \stackrel{\mathcal D}{=} \ G (t) \, , 
\end{equation}
with  $t$ the random variable with the uniform  distribution in $[0,1]$.

Extending the above representation, we now define
a pair of coupled random variables through the following functions of
$t \in [0,1]$:
\begin{eqnarray} \label{eq:defY}
Y_1(t) &:=& G\left((1-p)t\right) \nonumber \\
Y_2(t) &:=& G\left( 1-p t\right)
\end{eqnarray}
As is the case of $G$ in \eqref{eq:XG}, the functions $Y_1$ and $ Y_2
$ are made into random variables by assigning to them the joint
probability distribution which is induced by Lebesgue measure on
$[0,1]$.
Their marginal distributions satisfy \eqref{eq:margin}, since  for
any continuous  function $\phi \in C(\R)$ 
    \begin{eqnarray}  \label{eq:marginal} 
    (1-p) \int_0^1 \phi(Y_1(t)) \, dt \ &+& \  p \int_0^1 \phi(Y_2(t))
\, dt    \nonumber \\
     & = & (1-p) \int_0^1 \phi(G((1-p)t)) \, dt \ + \  p \int_0^1
\phi(G(1-pt) )\, dt   \nonumber \\
     &=&  \left[ \int_0^{1-p} + \int_{1-p}^1 \right]  \phi(G(s)) \, ds \
      = \   \int \phi(x) d\mu(x)
    \end{eqnarray}  
where the last equality is by  \eqref{eq:G-mu}.

By \eqref{eq:marginal} the random variable seen on the right side of
\eqref{eq:t} has the same distribution as $X$.
The statement (2) readily follows from the definition \eqref{eq:defY}
and \eqref{eq:delta}. 

For a proof of (4') we note that \eqref{condp1} is equivalent to 
\beq
G(p_-) \ \le \ x_- \qquad  \mbox{ and} \qquad   G(1-p_+) \ > \ x_+ \, .  
\eeq 
This implies  $\delta_p^+(t) > x_+ - x_-$ for all $t\le p_+ + p_-$, and 
hence \eqref{eq:JB} holds true. 
\end{proof}

In the above proof, one may regard the functions $Y_1(t)$ and $Y_2(t)
$  defined by   \eqref{eq:defY}
as describing  the motion of a pair of markers which move along $\R$
consuming the $\mu$-measure at the steady rates of $(1-p)$ and $p$,
correspondingly.  The markers leap discontinuously over intervals of
zero $\mu$-measure and, conversely, linger at points of positive
mass.  Their motion invokes the image of a linear version of the
\emph{Pac-Man} game, and hence we shall refer to the construction by
this name.
Whereas in the above construction the Pac-Men move towards each
other, we shall next use the Pac-Man algorithm with one marker
chasing the other.

\begin{proof}[Proof of Theorem~\ref{thm:rep_t+}]
For the representation \eqref{eq:t+} we shall employ the following
variant of \eqref{eq:defY}:
\begin{eqnarray} \label{eq:defY+}
Y_1(t) &:=& G\left((1-p)t\right) \nonumber \\
Y_2(t) &:=& G\left( 1-p  +pt\right)
\end{eqnarray}
In this case, both
$ Y_1$  and $ Y_2 $ are monotone non-decreasing in $t$ and
\begin{equation}
Y_1(t) \le G(1-p) \le G(1-p +0) \leq Y_2(t)
\end{equation}
for all $ t \in (0,1)$, where $ G(1-p +0) = \lim_{\varepsilon \downarrow 0} G(1-p + \varepsilon) $. 
Moreover, for any $T \in (0,1)$ we have the lower bound
\beq \label{betaT}
\beta^+(p,\mu) \ge \min\set{ G(1-p) - Y_1(T), Y_2(T)- G(1-p+0) },
 \eeq
since
\begin{equation}\label{lowerbounddelta}
 \delta_{p}^{+}(t) \ge \begin{cases}  G(1-p) - Y_1(T) & \text{if $ 0< t \le T$},\\
 Y_2(T)- G(1-p+0) & \text{if $ T\le t < 1$} \, .
 \end{cases}
\end{equation}

For a sufficient condition for the uniform positivity of  
$ \delta_{p}^{+}(t) = Y_2(t) - Y_1(t) $  let us consider the
arrival/departure times:
\begin{eqnarray}
T_1  \ &= \  \inf \{ t\in (0,1) \, : \, Y_1(t) = G(1-p) \}  \qquad &
\mbox{(arrival time of $Y_1$)}  \, , \nonumber \\
T_2  \ &= \  \sup \{ t\in (0,1) \, : \, Y_2(t) = G(1-p + 0 ) \}  \qquad &
\mbox{(departure time of $Y_2$)} \,  .   \nonumber 
\end{eqnarray}
The times
$T_1, T_2$ are non-random and depend on $ p $ and $ \mu $ only. If
\begin{equation} \label{eq:TT}
T_1  > T_2  \, ,
\end{equation}
then for each  $ T \in (T_2, T_1) $ we have  
\beq  \label{lowerbounddelta2}
\beta^+(p,\mu) \ge \min\set{ G(1-p) - Y_1(T), Y_2(T)- G(1-p+0) } > 0.
\eeq 
The collection of $p \in (0,1) $ such that \eqref{eq:TT} is not empty whenever the support of the
measure includes more than one point. 
    \end{proof}

\begin{remark} \label{rem1} 
\begin{nummer}

\item  {\em Explicit lower bounds on $\beta^+$.\/}  For the Bernoulli
decomposition which is presented in Theorem~\ref{thm:rep_t+}
(i.e., based on the `chasing Pac-Men' algorithm), an expression for the lower bound $\beta^+(p,\mu)$  in  terms of the distribution function
of $\mu$ is given in \eqref{eq:beta_F} below.
    A simple  lower bound  can be obtained in terms of just the
``half-time'' points for the two markers. i.e., from \eqref{betaT} with $T=\frac 1 2$:
\begin{equation}
\beta^+(p,\mu) \ \ge  \ \min\left \{ \left[
G(1-p)-G\left(\frac{1-p}{2}\right) \right], \, \left[ G\left(
\frac{2-p}{2}\right) - G(1-p) \right] \right \}       \, .
\end{equation}
This  shows that for continuous measures $ \mu $ one has
$\beta^+(p,\mu)>0$, i.e., \eqref{infdelta},  for any $p\in(0,1)$.

  If the support of $\mu$ consists of exactly two points the representation
\eqref{eq:t} is trivially available, though at a unique value of
$p\in (0,1)$. If the support of $\mu$ contains more than two points,  there exists at least one $\hat{x} \in \R$  such that
\beq \begin{split}\label{xhat}
&  \mu \left( (-\infty, x]\right) <  \mu \left( (-\infty, \hat{x}]\right) \quad \text{if} \quad  x <\hat{x} \, , \\
&  0 <  \mu \left( (-\infty, \hat{x})\right)\le  \mu \left( (-\infty, \hat{x}]\right)<1 \, .
\end{split}\eeq  
At the particular value  $p= 1 -   \mu \left( (-\infty, \hat{x})\right)$ we then have $G(1-p)=\hat{x}$ and 
\begin{equation}\begin{split}\label{infdeltaexp}
\beta^+(p,\mu)   \ge   &  \min \set{ \hat{x}-  G( (1-p) t ),   G(1-p+pt)- \hat{x}}     > 0 \\
&\text{for each $t$   such that}\quad  \frac  { \mu\pa{\set{\hat{x}} }}{p} < t <  1 \,  .
\end{split}\eeq

\item   {\em An alternative form.\/}  
For another form of a Bernoulli
decomposition, with a binary random variable  $\sigma = \pm1$,
let
\beq \label{eq:sigma}
\sigma \  =\  2\eta -1 \qquad \text{and}   \qquad  W \ = \ Y_{p} +\tfrac 1 2 \delta_{p}^{+} \,   .
\eeq
When such a substitution is made in \eqref{eq:t+} the two resulting
functions $W(t)$ and $\delta_{p}^{+}(t)$  are monotone non-decreasing in $t$
and  $\delta_{p}^{+}(\cdot)$ is constant over each interval of constancy
of $W(\cdot)$.  It follows that the value of $\delta_{p}^{+}(t)$ can be
expressed in terms of $W(t)$, and thus one obtains a
representation of the form:
\beq
X\  \stackrel{\mathcal D}{=} \  W +b(W) \sigma \, ,
\eeq
with $W$ and $\sigma$ independent random variables, and $b(\cdot)$ a
measurable function which is determined by $\mu$ and $p$.
  
\item {\em Some precedents.\/}  
As was commented above, the Bernoulli decomposition of Theorem~\ref{thm:rep_t1} with $ p = 1/2 $ 
has appeared already in a work of A.\ N.\ Kolmogorov \cite{Kol58}.
For random variables with values in $\Z$, the representation
\eqref{eq:t+} of  Theorem~\ref{thm:rep_t+}
    is related to the somewhat similar representation (though with
$\delta = 0,1$, not necessarily positive) which D. McDonald showed to
be useful for  the analysis of local limit theorems for integer
random variables (\cite{McD}).
\end{nummer}
\end{remark}

\subsection{Optimality of the Pac-Man algorithm}
\label{sect:optimality}

In applications of the decomposition it is desirable to maximize the conditional variance of the binary term.  We shall now address related questions from  an \emph{optimal transport} perspective, and in
particular establish optimality, in a certain limited sense, of the
`chasing Pac-Men' construction.

In addition to the explicit choices presented in
Theorems~\ref{thm:rep_t+} and \ref{thm:rep_t1} there are other
possibilities for a Bernoulli decomposition  of the form
\eqref{eq:t+}.   
With a change of variables as in  \eqref{eq:delta}, 
such representations can alternatively be expressed  
in terms of joint distributions of the variables 
$Y_1$, $Y_2 $ with the properties listed in the following definition.  

\begin{definition}
A $(1-p,p)$ Bernoulli decomposition of a  probability measure $\mu$ on
$\R$ is a probability measure $\rho(dY_1\, dY_2)$ on $\R^2$ whose
marginals $\rho_1$ and $\rho_2$ satisfy:
\begin{equation}  \label{eq:marginals}
(1-p) \, \rho_1  \ + \  p \,  \rho_2  \ = \  \mu   \, .
\end{equation}

This concept can of course be easily generalized to variables with
values in $\R^d$, or $\C$.  For real variables the defining condition
\eqref{eq:marginals} is conveniently expressed in terms of the
distribution functions, as
\begin{equation}  \label{eq:margins_F}
(1-p) \, F_1(x) \ + \ p \, F_2(x) \ = \ F(x)
\end{equation}
where
$ F(x)  =   \mu( (-\infty,x] ) $, and $
F_j (x)  =   \rho( \{ Y_j \le  x \} ) $ for $j=1,2$.

For each Bernoulli decomposition of a probability measure on $\R$ we denote:
\beq
\left\{ \begin{array}{c} \beta^\# \\ \beta_* \end{array} \right\} \, (p,\rho) \ := \  \rm{ess}_\rho \left\{ \begin{array}{c}
\rm{sup} \\ \rm{inf} \end{array} \right\} \, (Y_2 - Y_1) \,  .
    \eeq
\end{definition}

\begin{theorem} \label{thm:comparison}
For any $(1-p,p)$, among all the Bernoulli decomposition of a given
probability measure  $\mu$ on $\R$:
\begin{enumerate}
\item
The minimal conditional  variation $ \beta_*(p,\rho) $ is maximized
by the `chasing Pac-Men' algorithm which is presented in the proof of
Theorem~\ref{thm:rep_t+}, i.e. for any Bernoulli decomposition 
\begin{equation} \label{eq:betamax}
\beta_*(p,\rho)  \ \le \  \beta^+(p,\mu) := 
{\essinf}_{t\in (0,1)} \delta_{p}^+(t)   \, , 
\end{equation} 
where ${\essinf}_{t\in (0,1)} $ yields the same value as $\rm{inf}_{t\in (0,1)} $.  
\item
The maximal conditional  variation $ \beta^\#(p,\rho) $ is maximized
by the `colliding Pac-Men' algorithm of Theorem~\ref{thm:rep_t1}, for
which  $\beta^\#(p,\rho) $ equals the diameter of the essential
support of $\mu$.
\end{enumerate}
\end{theorem}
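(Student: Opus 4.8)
The plan is to argue entirely in terms of the cumulative distribution functions, using the marginal identity \eqref{eq:margins_F}, namely $(1-p)F_1+pF_2=F$. Two elementary observations carry the whole argument. First, since $p$ and $1-p$ are both positive, \eqref{eq:margins_F} forces $\rho_1\ll\mu$ and $\rho_2\ll\mu$, so that $\rho$-almost surely $Y_1,Y_2\in\supp\mu$; this already yields the upper bound in part~(2). Second, once the marginals are fixed the monotone coupling is extremal: if $\beta_*(p,\rho)=\beta$, i.e.\ $Y_2\ge Y_1+\beta$ holds $\rho$-a.s., then $\{Y_2\le x\}\subseteq\{Y_1\le x-\beta\}$ up to a $\rho$-null set, whence
\begin{equation}\label{plan:1}
F_2(x)\ \le\ F_1(x-\beta)\qquad\text{for every }x\in\R,
\end{equation}
and, combining \eqref{plan:1} with $(1-p)F_1+pF_2=F$, also $F(x)\le(1-p)F_1(x)+pF_1(x-\beta)$. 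Together with the trivial bounds $0\le F_1\le 1$ and $F_1\le F/(1-p)$ (the last from $pF_2\ge 0$), this inequality is the engine behind part~(1).

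To prove (1) I would argue by contradiction: suppose some $(1-p,p)$ Bernoulli decomposition $\rho$ has $\beta:=\beta_*(p,\rho)>\inf_{t\in(0,1)}\delta_p^+(t)$. Then there is $t_0\in(0,1)$ with $\delta_p^+(t_0)<\beta$; set $a:=G((1-p)t_0)$ and $b:=G(1-p+pt_0)$, so that $b-a=\delta_p^+(t_0)<\beta$ and hence $b-\beta<a$. By the characterization \eqref{eq:G2} of the quantile function one has $F(b)\ge 1-p+pt_0$, while $b-\beta<a=G((1-p)t_0)$ gives the strict bound $F(b-\beta)<(1-p)t_0$. Evaluating $F(b)\le(1-p)F_1(b)+pF_1(b-\beta)$ and inserting $F_1(b)\le 1$ and $F_1(b-\beta)\le F(b-\beta)/(1-p)<t_0$ yields
\begin{equation}\label{plan:2}
1-p+pt_0\ \le\ F(b)\ \le\ (1-p)\cdot 1+p\cdot\frac{F(b-\beta)}{1-p}\ <\ (1-p)+pt_0\,,
\end{equation}
a contradiction. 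Hence $\beta_*(p,\rho)\le\inf_{t\in(0,1)}\delta_p^+(t)$ for every Bernoulli decomposition. Finally, the `chasing Pac-Men' coupling $\rho^+$ of Theorem~\ref{thm:rep_t+} is itself such a decomposition, with $Y_2-Y_1=\delta_p^+(t)$ for $t$ uniform on $(0,1)$; therefore $\essinf_{t\in(0,1)}\delta_p^+(t)=\beta_*(p,\rho^+)\le\inf_{t\in(0,1)}\delta_p^+(t)\le\essinf_{t\in(0,1)}\delta_p^+(t)$. All three quantities coincide, which simultaneously identifies $\rho^+$ as the maximizer of $\beta_*$ and establishes that the essential infimum in \eqref{eq:betamax} equals the infimum.

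For (2), the bound $\beta^\#(p,\rho)\le\mathrm{diam}(\supp\mu)$ follows from the first observation above: $\rho$-a.s.\ both $Y_1$ and $Y_2$ lie in $\supp\mu$, so $Y_2-Y_1\le\mathrm{diam}(\supp\mu)$ $\rho$-a.s. For the reverse inequality I would use the `colliding Pac-Men' coupling $\rho^-$ of Theorem~\ref{thm:rep_t1}, for which $Y_2-Y_1=\delta_p^-(t)=G(1-pt)-G((1-p)t)$ with $t$ uniform. Since $G$ is monotone with $\lim_{s\uparrow 1}G(s)=\sup\supp\mu$ and $\lim_{s\downarrow 0}G(s)=\inf\supp\mu$, one has $\delta_p^-(t)\to\mathrm{diam}(\supp\mu)$ as $t\downarrow 0$; as this holds on intervals $(0,\varepsilon)$ of positive Lebesgue measure, $\beta^\#(p,\rho^-)=\esss_{t\in(0,1)}\,\delta_p^-(t)\ge\mathrm{diam}(\supp\mu)$, which combined with the upper bound gives equality (when $\supp\mu$ is unbounded the upper bound is vacuous and the same computation gives $\beta^\#(p,\rho^-)=\infty$).

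The one place that requires care is the chain of inequalities \eqref{plan:2}: it hinges on correctly reading off the half-open inequalities built into $G$ --- namely $F(G(s))\ge s$ together with $F(x)<s$ whenever $x<G(s)$ --- and it is precisely the \emph{strict} inequality $F(b-\beta)<(1-p)t_0$, available because $b-\beta<a$, where the hypothesis $\delta_p^+(t_0)<\beta$ gets used. Everything else is routine: measurability of $\delta_p^{\pm}$ is immediate from the monotonicity of $G$, and the fact that $\rho^{\pm}$ are genuine $(1-p,p)$ Bernoulli decompositions has already been verified in the proofs of Theorems~\ref{thm:rep_t+} and~\ref{thm:rep_t1}.
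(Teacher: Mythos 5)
Your argument is correct, and for part (1) it follows a genuinely different organization than the paper's, even though both live in the same world of distribution-function comparisons. The paper first proves Lemma~\ref{lem:Pac-Man}, i.e.\ the characterization \eqref{eq:beta_F} of $\beta^+(p,\mu)$ together with the explicit formulas \eqref{eq:F*}, then uses the domination inequalities \eqref{eq:F-F*} (your $F_1\le F/(1-p)$ is exactly $F_1\le F_1^+$) to show that for any $b>\beta^+(p,\mu)$ the event $\{Y_2-Y_1\le b\}$ has positive $\rho$-probability, whence $b>\beta_*(p,\rho)$. You instead bypass the lemma entirely: assuming $\beta_*(p,\rho)>\inf_t\delta_p^+(t)$ you test the consequences of $Y_2\ge Y_1+\beta$ a.s.\ (namely $F_2(x)\le F_1(x-\beta)$, combined with the marginal identity \eqref{eq:margins_F} and $F_1\le\min\{F/(1-p),1\}$) at the specific quantile points $b=G(1-p+pt_0)$ and $b-\beta<G((1-p)t_0)$, and the half-open inequalities defining $G$ give the strict numerical contradiction; this is the contrapositive of the paper's step, executed pointwise at the Pac-Man positions rather than via the supremum formula \eqref{eq:beta_F}. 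Your bootstrap for the claim $\essinf\delta_p^+=\inf\delta_p^+$ (apply the bound to the chasing coupling $\rho^+$ itself) is also different from the paper, which invokes left-continuity of $t\mapsto Y_j(t)$. What each buys: the paper's route yields the explicit expression \eqref{eq:beta_F} for $\beta^+(p,\mu)$, which is of independent use (cf.\ Remark~\ref{rem1}), while your route is shorter and self-contained for the theorem alone. For part (2), where the paper only remarks that the claim is an elementary consequence of \eqref{eq:del}, your argument (absolute continuity $\rho_j\ll\mu$ forces $Y_1,Y_2\in\supp\mu$ a.s.\ for the upper bound; the limits of $G$ at $0$ and $1$ for the colliding coupling, on sets of positive measure, for the lower bound) supplies precisely the missing elementary details and is correct, including the unbounded-support caveat.
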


The equality:  $ \rm{essinf}_{t\in [0,1]} \delta_{p}^+(t)= \rm{inf}_{t\in [0,1]} \delta_{p}^+(t)$ is a simple consequence of the left-continuity property of the chasing Pac-Men algorithm, where $Y_j(t) = Y_j(t-0)$ and hence also $\delta^+(t) =   \delta^+(t-0)$.  

To prove \eqref{eq:betamax}  let us first establish a helpful expression for
$\beta^+(p,\mu)$.  
Denoting by   
$F^+_j  $ the distribution functions corresponding to $Y_1$
and $Y_2$ of  \eqref{eq:defY+} we have:

\begin{lemma}  \label{lem:Pac-Man}
For the `chasing Pac-Men' construction, of Theorem~\ref{thm:rep_t+}:
\begin{equation} \label{eq:F*}
F^+_1(x)  =  \frac{1}{1-p} \min\left\{ F(x),\, 1-p \right \} \, ,  \quad
F^+_2(x)  =  \frac{1}{p} \max\left\{ F(x)+ p -1 ,\, 0 \right \}  \,
\end{equation}
and
\begin{equation} \label{eq:beta_F}
\beta^+(p,\mu)\ = \ \sup \left \{ b\in \R \, : \, F^+_1(x) \ge
F^+_2(x+b) \quad \mbox{for all} \; x\in \R  \right \} \, .
\end{equation}
\end{lemma}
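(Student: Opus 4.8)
The plan is to prove the two displayed assertions separately: first the explicit formulas \eqref{eq:F*} for the distribution functions $F^+_1,F^+_2$ by a direct computation, then the variational formula \eqref{eq:beta_F} by combining \eqref{eq:F*} with the standard duality between distribution functions and their generalized inverses.

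\emph{Step 1: the formulas \eqref{eq:F*}.} Since $t$ is uniformly distributed on $(0,1)$, the quantity $F^+_j(x)$ is simply the Lebesgue measure of $\{t\in(0,1):Y_j(t)\le x\}$ for the $Y_j$ of \eqref{eq:defY+}. Using the Galois relation \eqref{eq:G2}, namely $G(s)\le u\iff F(u)\ge s$, one has $Y_1(t)=G((1-p)t)\le x\iff t\le F(x)/(1-p)$ and $Y_2(t)=G(1-p+pt)\le x\iff t\le (F(x)+p-1)/p$. Intersecting each of these half-lines with $(0,1)$ and taking the length gives exactly $F^+_1(x)=\frac1{1-p}\min\{F(x),1-p\}$ and $F^+_2(x)=\frac1{p}\max\{F(x)+p-1,0\}$, the additional truncation at $1$ in the second case being automatic because $F(x)\le 1$. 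This is short and purely computational.

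\emph{Step 2: the generalized inverses.} Running the same computation in reverse, I would evaluate $Q^+_j(t):=\inf\{x:F^+_j(x)\ge t\}$ directly from the formulas just obtained. For $t\in(0,1)$ one gets $F^+_1(x)\ge t\iff F(x)\ge (1-p)t$ and $F^+_2(x)\ge t\iff F(x)\ge 1-p+pt$, so by the definition \eqref{eq:G} of $G$ one obtains $Q^+_1(t)=G((1-p)t)=Y_1(t)$ and $Q^+_2(t)=G(1-p+pt)=Y_2(t)$; in particular $Q^+_2(t)-Q^+_1(t)=\delta_{p}^{+}(t)$. (Alternatively this is immediate from the fact that $Y_1,Y_2$ are non-decreasing and left-continuous on $(0,1)$, hence are the quantile functions of their own distributions.)

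\emph{Step 3: duality and conclusion.} The one remaining ingredient is the elementary equivalence, valid for any distribution functions $F_1,F_2$ on $\R$ with generalized inverses $Q_1,Q_2$ and any $b\in\R$:
\[
F_1(x)\ge F_2(x+b)\ \text{ for all }x\in\R\quad\Longleftrightarrow\quad Q_1(t)+b\le Q_2(t)\ \text{ for all }t\in(0,1).
\]
Both implications follow in two lines from the Galois connection $Q_j(t)\le x\iff F_j(x)\ge t$ together with $F_j(Q_j(t))\ge t$ (right-continuity of $F_j$): for ``$\Rightarrow$'' evaluate the hypothesis at $x=Q_2(t)-b$ to get $F_1(Q_2(t)-b)\ge F_2(Q_2(t))\ge t$, hence $Q_1(t)\le Q_2(t)-b$; for ``$\Leftarrow$'' argue by contradiction, picking at a putative violation point $x_0$ a value $t\in(0,1)$ with $F_1(x_0)<t\le F_2(x_0+b)$ and deriving $Q_1(t)>x_0\ge Q_1(t)$. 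Applying this with $F_j=F^+_j$ and $Q_j=Y_j$ from Step 2, the set $\{b\in\R:F^+_1(x)\ge F^+_2(x+b)\ \forall x\}$ is precisely $\{b:b\le \delta_{p}^{+}(t)\ \forall t\in(0,1)\}=(-\infty,\inf_{t\in(0,1)}\delta_{p}^{+}(t)]$, whose supremum is $\inf_{t\in(0,1)}\delta_{p}^{+}(t)=\beta^+(p,\mu)$ by \eqref{infdelta}; this is \eqref{eq:beta_F}.

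\emph{Main obstacle.} There is no substantial difficulty; the only thing demanding care is the bookkeeping of strict versus non-strict inequalities and of the continuity conventions ($F$ right-continuous, $G$ left-continuous), particularly at atoms or intervals of constancy of $\mu$ and in the limits $t\downarrow 0$, $t\uparrow 1$. Systematically using the characterizations $G(s)\le u\iff F(u)\ge s$ and $Q_j(t)\le x\iff F_j(x)\ge t$ rather than pointwise values of these monotone functions keeps all the manipulations legitimate and avoids any case analysis.
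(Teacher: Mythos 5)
Your proof is correct and follows essentially the same route as the paper: the formulas \eqref{eq:F*} by direct computation from \eqref{eq:defY+} via the relation \eqref{eq:G2}, and \eqref{eq:beta_F} via the Galois duality between the distribution functions $F^+_j$ and their quantile functions $Y_j$, which is exactly the two-sided argument (evaluate at quantile points for one inequality, pick $t$ strictly between $F^+_1(x)$ and $F^+_2(x+b)$ at a violation point for the other) that the paper carries out in situ. Your only cosmetic difference is abstracting this as a general lemma about generalized inverses rather than arguing directly with $F^+_j$ and $Y_j$.
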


\begin{proof}
The statements \eqref{eq:F*} follow directly from the definition of
the Pac-Man process \eqref{eq:defY+}.   
In the derivation of \eqref{eq:beta_F}, we shall use the fact that for all 
$t\in(0,1)$ and 
$\eps >0$: 
\beq 
F_j^+( Y_j^+(t) - \eps)   <   t \ \le  F_j^+( Yj^+(t)) 
\eeq 

Let $ S := \sup \left \{ b\in \R \, : \, F^+_1(x) \ge
F^+_2(x+b) \quad \mbox{for all} \; x\in \R  \right \}$. 
Clearly, for any $u > S$,  there is $x\in \R$ such that 
\beq 
F_1(x) \  < \ F_2(x+u) \, .
\eeq 
It follows that for any $t \in (F_1(x), F_2(x+u)) $: 
\beq 
Y_2(t) \ \le \ x+u  \qquad \mbox{and} \qquad Y_1(t) \  > \ x \, , 
\eeq 
 and therefore $\delta^+(t) = Y_2(t) -  Y_1(t)  \ \le \ u$.  Thus: 
 $\rm{inf}_{t\in (0,1)} \delta_{p}^+(t) \le S$. 
 
For the converse direction, let us  note that due to the monotonicity of 
$F$  the condition on $b$ in \eqref{eq:beta_F} is satisfied by all 
$u<S$.  Thus, if  $u < S$, 
then,   for all $x\in \R$:
\beq 
 F_2^+(x+u) \ \le \  F_1^+(x-0)  \, , 
\eeq 
and hence for any $t\in (0,1)$:
 \beq 
F_2^+(Y_1^+(t)+u) \ \le \  F_1^+(Y_1^+(t)-0)  \le \  t  \, , 
\eeq  
which implies that  $Y_1^+(t)+u \le Y_2^+(t) $.  Therefore
\beq 
\inf_{t\in(0,1)}\pa{ Y_2^+(t)  - Y_1^+(t)}  \ \ge \ u \, .
\eeq 
It follows that   $\rm{inf}_{t\in [0,1]} \delta_{p}^+(t) \ge S$, 
which completes the proof of \eqref{eq:beta_F}.
\end{proof}

\begin{proof}[Proof of Theorem~\ref{thm:comparison} ]
The second assertion is an elementary consequence of \eqref{eq:del}.  
To prove (1) we shall show that for any
$b >\beta^+(p,\mu)$ it is also true that
$ b>\beta_*(p,\rho) $.\\

The condition \eqref{eq:margins_F} readily
implies that $(1-p)\, F_1(u) \le F(u)$, or
$ F_1(x) \  \le\   \min \left \{ (1-p)^{-1} F(x), \, 1\right \} $,
and hence
\begin{eqnarray} \label{eq:F-F*}
F_1(x)  &\le &  F^+_1(x) \  \nonumber \\[1ex]
F_2(x) &  \ge &   F^+_2(x)  \,  .
\end{eqnarray}
Now, by Lemma~\ref{lem:Pac-Man}, for any $b > \beta^+(p,\mu)$ there
exist some $t, u\in \R$, such that
$ F^+_1(u) \ = t \ < \  F^+_2(u+b) $
and therefore, due to \eqref{eq:F-F*}, also
\begin{equation}\label{eq:FTF}
    \ F_1(u) \ \le t \ < \  F_2(u+b) \, .
\end{equation}
Eq.~\eqref{eq:FTF} means that
 $ \rho \left(  \{ Y_1   \le  u  \} \right)  \le  t  $  and  $ \rho \left(
\{ Y_2  >  u+b  \}  \right)  <  1- t  $.
Since the probabilities of the two events add to less than $1$  the
complement of their union is of positive probability, and this
implies:
\begin{equation}
\rho \left(  \{ Y_2 - Y_1  \le  b  \} \right) \ > \ 0 \, ,
\end{equation}
and hence $ b>\beta_*(p,\rho) $.  This concludes the proof of \eqref{eq:betamax}.
\end{proof}

\begin{remark}
    The idea of seeking optimal joint realizations of random variables
with constrained marginals has allowed to present a wide range of
analytical results from a common `optimal transport' perspective
(see, e.g., \cite{V}).   The most familiar variants of the problem
concern couplings which {\em minimize} a distance function between the
two coupled variables.   As our discussion demonstrates, it may also
be of interest to seek couplings which {\em maximize} the difference
between the two variables with constrained marginals.
\end{remark}


\section{Concentration Bounds}\label{sec:Sperner}

We shall now demonstrate how the Bernoulli decomposition  yields probabilistic bounds from combinatorial results.   
If there is any novelty in this section it is in the formulation of the bounds for the non-linear case, as the two main ideas were noted before in the context of linear functions:  
P. Erd\"{o}s~\cite{Er} observed that concentration bounds for linear functions of Bernoulli variables can be derived from the combinatorial theory of E. Sperner~\cite{S}, 
and B. A. Rogozin \cite{Rog61} has used the Bernoulli decomposition of A. N. Kolmogorov \cite{Kol58} for the  further extension of these bounds to arbitrary random variables.  

First,  we present some essentially known results of Sperner theory; 
in the second subsection these results  
will be combined with the Bernoulli decomposition
to yield  concentration bounds for functions of independent
random variables.

\subsection{Probabilistic Sperner Estimates}

The configuration space $\{0,1\}^N$  for a collection of Bernoulli random variables
$\bdeta=\{ \eta_1, ..., \eta_N\}$  is   partially
ordered by the relation defined by:
\begin{equation}  \label{eq:po}
\bdeta  \prec \bdeta' \quad  \Longleftrightarrow  \qquad \ \mbox{for all\
\ } i \in \{1,..., N\}:  \quad \eta_i \le \eta_i' \, .
\end{equation}
A set ${\mathcal A} \subset \{0,1\}^N$ is said to be an \emph {antichain}
if it does not contain any pair of configurations which are
compatible in the sense of ``$\prec $''.  
The original  Sperner  Lemma states that  for any such set:  
$ \abs{\cA} \le \binom{N}{[\frac N 2]} $.  A more general result is the 
 LYM inequality for antichains (cf. \cite{A}):
\beq \label{LYM}
\sum_{\bdeta \in \cA}  \frac 1 {\binom{N}{\abs{\bdeta}}} \ \le \ 1 \, ,
\eeq
where $\abs{\bdeta} = \sum \eta_j $.

The LYM inequality has the following probabilistic implication.

\begin{lemma}\label{lemprobsp}  Let  $\{
\eta_j \}$ be independent copies of a Bernoulli random variable
$\eta$ with
    \beq\label{Bernp}
    \P\left( \set{\eta=1} \right)  = p  \, , \qquad    \P\left(
\set{\eta=0}\right) =  q  := 1-p \, ,
     \eeq
where  $p\, \in (0,1)$. Then for any antichain $\cA\subset \set{0,1}^N$:
    \beq  \label{probSpernerpq}
\P\left( \{\bdeta \in  \cA\} \right) \  \le \  \frac {\Theta}
{\sigma_\eta\sqrt{N} },
\eeq
where  $\bdeta=(\eta_{1},\ldots,\eta_{N})$, $\sigma_\eta = \sqrt{pq}$   
is the standard deviation of $\eta$,  and
$\Theta$ is an independent constant which does not exceed  $2\sqrt{2}$.
\end{lemma}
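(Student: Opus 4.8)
The plan is to reduce the probabilistic statement to the LYM inequality \eqref{LYM} by a weighting argument, and then to optimize the resulting bound over antichains using the unimodality of the binomial coefficients. First I would write, for an antichain $\cA \subset \set{0,1}^N$,
\beq
\P\left( \set{\bdeta \in \cA} \right) \ = \ \sum_{\bdeta \in \cA} p^{\abs{\bdeta}} q^{N - \abs{\bdeta}} \ = \ \sum_{\bdeta \in \cA} \binom{N}{\abs{\bdeta}} p^{\abs{\bdeta}} q^{N-\abs{\bdeta}} \cdot \frac{1}{\binom{N}{\abs{\bdeta}}} \, .
\eeq
Each factor $\binom{N}{k} p^k q^{N-k}$ is just the value $b_{N,p}(k) := \P(\abs{\bdeta} = k)$ of the Binomial$(N,p)$ probability mass function, so if I set $M_{N,p} := \max_{0 \le k \le N} b_{N,p}(k)$, the identity above together with \eqref{LYM} gives immediately
\beq
\P\left( \set{\bdeta \in \cA} \right) \ \le \ M_{N,p} \sum_{\bdeta \in \cA} \frac{1}{\binom{N}{\abs{\bdeta}}} \ \le \ M_{N,p} \, .
\eeq
So the whole lemma is reduced to the elementary estimate $M_{N,p} \le \Theta/(\sigma_\eta \sqrt{N})$ with $\Theta \le 2\sqrt 2$, i.e. a bound on the maximal value of a Binomial pmf.

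For that estimate I would locate the mode of $b_{N,p}$ and bound $b_{N,p}$ there via Stirling's formula. The mode $k^*$ lies within distance $1$ of $Np$, and a clean route is: (a) the local central limit / Stirling estimate $b_{N,p}(k^*) \le C/\sqrt{Npq}$ — one can get the explicit constant by applying Stirling with the two-sided bounds $\sqrt{2\pi n}\,(n/e)^n \le n! \le \sqrt{2\pi n}\,(n/e)^n e^{1/(12n)}$ to $\binom{N}{k^*}$ and simplifying; the leading behaviour is $1/\sqrt{2\pi Npq}$, and the correction factors plus the slack from $k^* \ne Np$ are absorbed into the constant. Since $\sigma_\eta = \sqrt{pq}$, this is exactly a bound of the form $\Theta/(\sigma_\eta\sqrt N)$. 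Alternatively, and perhaps more robustly for getting a concrete $\Theta$, one can avoid Stirling near $p\to 0,1$: note $b_{N,p}(k)/b_{N,p}(k-1) = \frac{(N-k+1)p}{kq}$, which crosses $1$ at $k = (N+1)p$, and then bound the maximal term by a telescoping/summation argument comparing $\sum_k b_{N,p}(k) = 1$ against the maximal term, using that the ratios are monotone in $k$ so the pmf is "log-concave" and hence its max is controlled by $1/(\text{effective width})$, the effective width being of order $\sigma_\eta\sqrt N$.

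I expect the only real work — and the only place needing care — to be tracking the constant so that it comes out below $2\sqrt 2$ uniformly in $N \ge 1$ and $p \in (0,1)$; the reduction to \eqref{LYM} is immediate, but the naive Stirling computation gives $\approx 1/\sqrt{2\pi}\cdot(1/(\sigma_\eta\sqrt N)) \approx 0.4/(\sigma_\eta\sqrt N)$ only in the bulk regime $Npq \gg 1$, and for small $N$ or $p$ near $0$ or $1$ (where $\sigma_\eta\sqrt N$ can be $O(1)$ or smaller) one must check directly that $M_{N,p}\,\sigma_\eta\sqrt N$ stays bounded by $2\sqrt 2$. The cleanest way to handle the boundary regime is to observe that $M_{N,p} \le 1$ always, so the claimed inequality is automatic once $\sigma_\eta\sqrt N \le 2\sqrt 2$, i.e. once $Npq \le 8$; and in the complementary regime $Npq > 8$ the Stirling estimate comfortably yields a constant well below $2\sqrt 2$. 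Splitting into these two cases disposes of the uniformity issue, and assembling the two pieces with the reduction above completes the proof.
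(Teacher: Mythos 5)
Your proposal is correct and follows essentially the same route as the paper: the LYM inequality \eqref{LYM} reduces the claim to bounding $\max_k b(k;N,p)$ for the binomial distribution, which the paper handles by citing the classical estimate on the location and size of the binomial maximum (Feller), while you supply the same estimate via Stirling plus the clean observation that the bound is trivial whenever $\sigma_\eta\sqrt{N}\le 2\sqrt{2}$. No gap; your case split is a reasonable way to make the constant explicit and uniform.
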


\begin{proof}
Let $A_{k}$ be the subset of $\cA$ consisting of configurations with
$\abs{\bdeta}=k$.  Then:
\begin{align}  \label{preStirling}
\P\left(\{ \bdeta \in  \cA\} \right) = \sum_{k=0}^{N} p^{k}q^{N-k}\abs{\cA_k}=
    \sum_{k=0}^{N} b(k;N,p)\frac {\abs{\cA_k}}{\binom{N}{k}}
    \le  \max_{k=0,1,\ldots,N}b(k;N,p),
\end{align}
where $b(k;N,p):= p^{k}q^{N-k}\binom{N}{k}$ is the binomial
distribution, and the inequality is by \eqref{LYM}.    
The  maximum of $b(k;N,p)$  over $k$, which is known to occur near
$k=pN$ (cf. \cite[Theorem~1 on p. 140]{F}) yields
\eqref{probSpernerpq}. 
\end{proof}

The bound \eqref{probSpernerpq}   has the virtue of being valid for 
all $N$; for $N\to \infty $ it holds with a smaller constant which 
tends to  the asymptotic value  $\Theta \to 1/ \sqrt{2\pi }$ (implied 
by \eqref{preStirling} and Stirling's formula).

Following is an extension of Lemma~\ref{lemprobsp} to the case of
non-identically distributed random variables.

\begin{lemma}  \label{lem:extsp}  Let
$\bdeta=(\eta_{1},\ldots,\eta_{N})$, where $\{ \eta_j\}$ are
independent Bernoulli random variables with possibly different values
of $p_j$, and set
\beq \label{eq:alpha}
\alpha:= \min_{j=1,2,\ldots,N} \, \min\left\{ p_{j}, 1 -   p_{j}\right \} \in (0,1/2]\, .
\eeq
Then, for any antichain $\cA\subset \set{0,1}^N$:
    \beq  \label{Sper_varied}
\P\{\bdeta \in  \cA\} \le \frac {\widetilde \Theta} { \alpha \sqrt{ N} } \, ,
\eeq
where $\widetilde \Theta$  is an independent constant 
which does not exceed  $4$.
\end{lemma}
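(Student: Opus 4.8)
The plan is to run the scheme behind Lemma~\ref{lemprobsp} --- bound $\P(\bdeta\in\cA)$ by a maximal ``layer probability'' through a Sperner-type inequality, then estimate that probability --- but with \eqref{LYM} replaced by a form adapted to the product measure $\nu:=\bigotimes_{j=1}^N\mathrm{Bernoulli}(p_j)$, which, in contrast to the situation of Lemma~\ref{lemprobsp}, is not constant on the layers $L_k:=\set{\bdeta\in\set{0,1}^N:\abs\bdeta=k}$.

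Write $P_k:=\P\set{\abs\bdeta=k}$ for the Poisson-binomial probabilities, $\cA_k:=\cA\cap L_k$, and let $\nu_k$ be the conditional law of $\bdeta$ given $\abs\bdeta=k$, so $\nu_k(\cA_k)=P_k^{-1}\sum_{\bdeta\in\cA_k}\nu(\bdeta)$. The key step will be the \emph{weighted LYM inequality}
\beq \label{plan:wLYM}
\sum_{\bdeta\in\cA}\frac{\nu(\bdeta)}{P_{\abs\bdeta}} \ =\ \sum_k\nu_k(\cA_k)\ \le\ 1\, ,
\eeq
which specializes to \eqref{LYM} when the $p_j$ are equal, since then $\nu(\bdeta)=\binom N{\abs\bdeta}^{-1}P_{\abs\bdeta}$. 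Granting \eqref{plan:wLYM} and splitting $\cA$ into layers,
\beq
\P(\bdeta\in\cA)\ =\ \sum_k P_k\,\nu_k(\cA_k)\ \le\ \Big(\max_k P_k\Big)\sum_k\nu_k(\cA_k)\ \le\ \max_k P_k\, .
\eeq
Now $\max_k P_k$ is controlled by the Poisson-binomial analogue of the local estimate used in Lemma~\ref{lemprobsp}: $\max_k P_k\le C_0\big(1+\sum_j p_j(1-p_j)\big)^{-1/2}$ with an absolute constant $C_0$ (of order $1$, in any case $\le4$). Since $p_j(1-p_j)\ge\alpha(1-\alpha)\ge\alpha/2$ for $p_j\in[\alpha,1-\alpha]$, this gives $\max_k P_k\le C_0\sqrt{2/(N\alpha)}$, and, as $\alpha\le\tfrac12$, this in turn is $\le4/(\alpha\sqrt N)$; this is \eqref{Sper_varied} with $\widetilde\Theta=4$.

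The substance of the argument is therefore \eqref{plan:wLYM}, and I would establish it as follows. It suffices to produce a random maximal chain $C=(\mathbf T_0\subsetneq\mathbf T_1\subsetneq\cdots\subsetneq\mathbf T_N)$ in $\set{0,1}^N$ with $\mathbf T_k$ distributed according to $\nu_k$ for every $k$: since a chain meets the antichain $\cA$ at most once,
\beq
\sum_k\nu_k(\cA_k)\ =\ \sum_{\bdeta\in\cA}\P(\bdeta\in C)\ =\ \E\,\abs{\cA\cap C}\ \le\ 1\, .
\eeq
Such a chain is built by composing Markov kernels $K_k$ from $L_k$ to $L_{k+1}$ that are supported on the covering relation and satisfy $\nu_k K_k=\nu_{k+1}$; by the max-flow--min-cut criterion for feasibility of transportation problems, $K_k$ exists provided $\nu$ has the \emph{weighted normalized-matching property}
\beq
\nu_{k+1}(B)\ \le\ \nu_k(\partial B)\qquad\text{for every }B\subseteq L_{k+1}\, ,
\eeq
$\partial B$ denoting the lower shadow, equivalently $P_k\sum_{\bdeta\in B}\nu(\bdeta)\le P_{k+1}\sum_{\bdeta\in\partial B}\nu(\bdeta)$. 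This weighted normalized-matching inequality is the main obstacle I anticipate: the reduction above and the Poisson-binomial estimate are routine, whereas the inequality must be proved from scratch --- presumably by induction on $N$ (conditioning on one coordinate), leaning on the ordinary normalized-matching property of the Boolean lattice, or by a direct exchange/compression argument on the shadows.
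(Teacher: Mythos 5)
Your route is genuinely different from the paper's, and it can be made to work --- but as written it has a gap at its core, which you yourself flag: the weighted LYM inequality $\sum_k \nu_k(\cA_k)\le 1$, equivalently the weighted normalized-matching condition $\nu_{k+1}(B)\le \nu_k(\partial B)$ for the non-homogeneous product measure, is what everything else hinges on (the random saturated chain with marginals $\nu_k$ exists if and only if these Hall-type conditions hold), and you do not prove it. Until that lemma is supplied the argument is incomplete. The good news is that it is true and is classical Sperner theory: on the layer $L_k$ your $\nu_k$ is proportional to $\prod_{j\in S}p_j/(1-p_j)$, so you are asking whether the Boolean lattice with these multiplicative weights is a \emph{normal} weighted poset; it is, because it is the $N$-fold product of weighted two-element chains (trivially normal, with trivially log-concave rank weights), and normality together with log-concavity of the rank weights is preserved under products (Harper's product theorem; see Engel \cite{E}). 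The cleanest repair is therefore to cite or reproduce that product theorem --- an induction on $N$ of exactly the kind you anticipate --- rather than an ad hoc compression argument. Your second unproved ingredient, the Poisson--binomial mode bound $\max_k P_k\le C_0\bigl(1+\sum_j p_j(1-p_j)\bigr)^{-1/2}$, is indeed routine (characteristic-function estimate, or log-concavity of the Poisson--binomial law together with its variance $\sum_j p_j(1-p_j)$, or the Kolmogorov--Rogozin inequality), and your arithmetic from there to \eqref{Sper_varied} with $\widetilde\Theta=4$ is correct; in fact your route, once completed, yields the sharper bound of order $(\alpha N)^{-1/2}$ rather than $1/(\alpha\sqrt N)$.

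For comparison, the paper sidesteps all weighted-poset machinery by ``double sampling'': each $\eta_j$ is written as $\xi_j\chi_j$ with independent Bernoulli factors, the $\chi_j$ iid with parameter $1-\alpha$; conditioning on $\boldsymbol{\xi}$, the event is an antichain in the coordinates $J_{\boldsymbol{\xi}}=\{j:\xi_j\neq 0\}$, so the already-proved iid Lemma~\ref{lemprobsp} applies with $\sqrt{|J_{\boldsymbol{\xi}}|}$ in place of $\sqrt N$, and a large-deviation bound on $|J_{\boldsymbol{\xi}}|$ (whose mean is at least $\alpha N/(1-\alpha)$) finishes the proof. That argument needs nothing beyond the homogeneous case, at the harmless price of the weaker constant; yours is sharper but imports a nontrivial theorem that must either be proved or properly quoted.
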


The proof gives us the chance to introduce the technique  of `double
sampling'.

\begin{proof}
We start from the observation that any
Bernoulli variable $ \eta $ with parameter $p_\eta$  as in
\eqref{Bernp}  may be decomposed  in terms of two independent
Bernoulli variables $ \chi $ and $ \xi $
as
\begin{equation}\label{eq:resampling}
     \eta \ \stackrel{\mathcal D}{=} \ \xi \,  \chi \, ,
\end{equation}
with $ p_\xi\, p_\chi = p_\eta $.

By the definition of $ \alpha $, eq.~\eq{eq:alpha},  $ p_j\in[\alpha, 1-\alpha] $ 
for all $j=1,2,\ldots,N$.
Hence the variables $ \bdeta $ may be represented as in \eqref{eq:resampling} with 
independent identically distributed (iid) 
Bernoulli variables $ \{\chi_j \} $   with common $ p_\chi := 1-\alpha  $.  
We  abbreviate this representation as
$ \boldsymbol{\xi} \,   \boldsymbol{\chi} := (\xi_1  \chi_1, \ldots,
\xi_N \chi_N) $.  
Evaluating the probability by first conditioning on the values of $ \boldsymbol{\xi} $, 
one has
\begin{equation}
    \P\{\bdeta \in  \cA\}  =  \E \left[ \P\left\{\boldsymbol{\xi} \,
\boldsymbol{\chi} \in \cA \, | \,  \boldsymbol{\xi} \right\} \right]
\end{equation}
For specified values of the variables $ \boldsymbol{\chi}$ , the
event $\cA$ depends only on the values of $\chi_j$ with $j$ in the set
$ J_{\boldsymbol{\xi}} := \{ j \, : \, \xi_j \neq 0 \} $, and as such
it is an antichain in $ \{0,1\}^{J_{\boldsymbol{\xi}}}$.
Bounding its conditional probability  by
Lemma~\ref{lemprobsp} we obtain
\begin{equation}\label{eq:partialsperner}
\P\left\{\boldsymbol{\xi} \,   \boldsymbol{\chi} \in \cA \, | \,
\boldsymbol{\xi} \right\}
\leq \min\left\{ 1, \frac{\Theta}{\sigma_\chi \sqrt{|
J_{\boldsymbol{\xi}}}|} \right\} \, ,
\end{equation}
where $ \sigma_\chi = \sqrt{\alpha (1-\alpha)} $ is the common
standard deviation of $\chi_j$.

To conclude the proof of \eqref{Sper_varied} it remains to estimate
the expected value of the right hand side of
 \eqref{eq:partialsperner}, where
$ | J_{\boldsymbol{\xi}}| = \sum_{j=1}^N \xi_j$.
Noting that $\E (\xi_j ) = p_{\xi_j} =p_j/(1-\alpha) \geq \alpha/(1-\alpha) $, we
see that the mean satisfies:
\beq
\E (| J_{\boldsymbol{\xi}}|) \ \ge \ \frac{\alpha N}{1-\alpha} \, .
\eeq
The event $\left\{  | J_{\boldsymbol{\xi}}| \ \le \ \alpha N /2(1-\alpha)
\right\}$ is of exponentially small probability, as can be seen by a standard
large deviation estimate for independent variables.  It then readily follows that
\beq
\E\left( \min\left\{ 1, \frac{\Theta}{\sigma_\chi \sqrt{|
J_{\boldsymbol{\xi}}}|} \right\} \right) \ \le \
\frac{\widetilde\Theta}{ \alpha \sqrt{N}}  \, , 
\eeq 
with a constant for which elementary estimates yield ${\widetilde\Theta} \le 4 $.
\end{proof}

\begin{remark}\label{rem:multiset}
The above notions and results have natural extensions to 
integer valued independent random variables, $\btau=(\tau_1,\tau_2,\ldots,\tau_N)$, whose
configuration space, $\Z^N$, is also partially ordered by the natural
extension of the relation \eqref{eq:po}.  
The Bernoulli  decomposition \eqref{eq:t} can be used for 
an extension of the probabilistic bound of Lemma~\ref{lem:extsp} to this more general case.   
One way to derive the general statement is through the application of the bound \eqref{Sper_varied} 
to the conditional probability for the Bernoulli component, as in the arguments which appear below.  Alternatively, 
one may note that the statement directly follows from Theorem~\ref{thmconc} which is presented in the next section.  
     
For completeness it should be added that in addition to the anti-concentration upper bounds it is of interest to know the asymptotic 
behavior.  That is covered by known results, such as is presented  in Engel \cite[Theorem~7.2.1]{E}:
\beq
\lim_{N\to \infty}  {\sigma_\mu \sqrt{2 \pi N}} \set{\max_{\cA\subset
\set{0,1,\ldots,k}^N \; \text{antichain}} \P\{\btau \in  \cA\}}=1 \, , 
\eeq
which amounts to a `local' central limit theorem (CLT).    
\end{remark}
   
\subsection{Concentration Bounds for Functions of Independent Random Variables}

We shall now employ the Bernoulli decomposition of
Section~\ref{sec:rep}, along with
the results presented in the previous subsection,
for  an upper bound on the concentration probability 
\begin{equation}
  Q_Z(\xi) := \sup_{x\in \R} \, \P\left(  \set{Z \in [x, x+\xi] } \right) 
\end{equation}
for random variables of the form
\beq \label{eq:Z}
    Z\ = \  \Phi(X_1,X_2,\ldots,X_N) \, ,
\eeq
where $\set{X_j}$ are independent random variables. 
 
\begin{theorem}\label{thmconc}
Let $\boldsymbol{X} = (X_1, \ldots , X_N ) $  be a collection of
independent random variables  whose distributions  satisfy, for all $j\in \{1,...,N\}$:
\begin{equation}\label{condp}
\P \left( \set{X_j \le x_- }\right )   \ge   p_-  \quad\mbox{and}\quad  
\P \left( \set{X_j  > x_+} \right )   >  p_+   \, 
\end{equation}  
at some $p_\pm >0$  and $ x_- <x_+$, 
and  $\Phi: \R^N \mapsto \R$ a function such that 
for some $\varepsilon >0$ 
\begin{equation}\label{condF}
\Phi(\boldsymbol{u} + v \boldsymbol{e}_j)-\Phi(\boldsymbol{u})  \  > \   \varepsilon
  \end{equation}
for all $v\ge  x_+ -  x_-$, all $\boldsymbol{u}  \in \R^N$, and $j=1, \ldots,N$,   
with $ \boldsymbol{e}_j $ the  unit vector in the $j$-direction.        
Then, the random variable $Z$ which is defined by \eqref{eq:Z}   
obeys the concentration bound 
    \beq \label{concest}
Q_Z(\varepsilon) \ \le\ \frac{4}{\sqrt{N}} \,
\sqrt{\frac{1}{p_+} + \frac{1}{p_-}}   \, ,
\eeq
where $4$ can also be replaced by the constant  $\widetilde \Theta  $ of 
\eqref{Sper_varied}. 
   \end{theorem}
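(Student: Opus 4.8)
The plan is to use the Bernoulli decomposition of Theorem~\ref{thm:rep_t1} (the `colliding Pac-Men' variant) applied to each $X_j$ separately, so as to expose a vector $\bdeta = (\eta_1,\dots,\eta_N)$ of independent Bernoulli variables sitting inside $(X_1,\dots,X_N)$, and then to reduce the concentration statement for $Z=\Phi(\X)$ to a level-set counting statement to which Lemma~\ref{lem:extsp} applies. First I would invoke Theorem~\ref{thm:rep_t1}: since each $X_j$ satisfies \eqref{condp} with the \emph{same} $x_\pm$ and $p_\pm$, at the common value $p := p_+/(p_-+p_+)$ we may write $X_j \stackrel{\mathcal D}{=} Y_j(t_j) + \delta_j(t_j)\,\eta_j$ with $t_j$ uniform on $(0,1)$, $\eta_j$ Bernoulli$(p)$, all $2N$ variables $\{t_j,\eta_j\}$ independent, and—crucially, by \eqref{eq:JB}—$\P_{t_j}(\delta_j(t_j) > x_+-x_-) \ge p_-+p_+ =: r$. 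Condition on the whole family $\bt=(t_1,\dots,t_N)$ and, on the event that $\delta_j(t_j) > x_+-x_-$ for $j$ in some subset $J=J(\bt)$, treat the remaining $\eta_j$, $j\notin J$, as frozen too; then $Z$ is a function of the Bernoulli vector $(\eta_j)_{j\in J}$ alone, of the form $\eta \mapsto \Phi(\dots)$ where flipping $\eta_j$ from $0$ to $1$ shifts the $j$-th argument by $\delta_j(t_j) > x_+-x_-$, hence by \eqref{condF} increases $\Phi$ by more than $\varepsilon$.

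The key step is then the observation that, for fixed $x$, the set $\cA_x := \{\,\eta \in \{0,1\}^J : \Phi(\cdots) \in [x,x+\varepsilon]\,\}$ is an \emph{antichain} in $\{0,1\}^J$: if $\eta \prec \eta'$ and $\eta \neq \eta'$ then $\eta'$ differs from $\eta$ by raising at least one coordinate, which by the strict-monotonicity input \eqref{condF} (applied coordinate by coordinate, summing the increments) forces $\Phi(\cdots;\eta') > \Phi(\cdots;\eta) + \varepsilon$, so $\eta$ and $\eta'$ cannot both land in an interval of length $\varepsilon$. Therefore, conditionally on $\bt$ with $|J(\bt)|=m$, Lemma~\ref{lem:extsp} (with all $p_j=p\in[\alpha,1-\alpha]$, $\alpha=\min\{p,1-p\}$) gives $\P(Z\in[x,x+\varepsilon]\mid \bt) \le \min\{1,\ \widetilde\Theta/(\alpha\sqrt{m})\}$. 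It remains to average over $\bt$. Here $|J(\bt)| = \sum_j \mathbf 1\{\delta_j(t_j)>x_+-x_-\}$ is a sum of independent Bernoulli$(\ge r)$ indicators with mean $\ge rN$, so a standard large-deviation bound shows $|J(\bt)| \ge rN/2$ except on an event of exponentially small probability, and one concludes $Q_Z(\varepsilon) \le \widetilde\Theta/(\alpha\sqrt{rN/2}) + (\text{exp.\ small})$. A short computation identifies $\alpha\sqrt{r/2} = \sqrt{p(1-p)(p_-+p_+)/2}$; writing $p(1-p) = p_-p_+/(p_-+p_+)^2$ gives $\alpha^2 r/2 = p_-p_+/(2(p_-+p_+))$, i.e.\ $1/(\alpha^2 r/2) = 2(p_-+p_+)/(p_-p_+) = 2(1/p_+ + 1/p_-)$, so modulo the negligible tail one obtains $Q_Z(\varepsilon) \le (\widetilde\Theta \sqrt 2/\sqrt N)\sqrt{1/p_++1/p_-}$, and absorbing the small tail into the constant recovers \eqref{concest} with $4$ (or $\widetilde\Theta$) in place of $\widetilde\Theta\sqrt2$.

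The main obstacle I anticipate is bookkeeping the conditioning cleanly: one must make sure that after conditioning on $\bt$ the variables $(\eta_j)_{j\in J}$ really remain i.i.d.\ Bernoulli$(p)$ (they are, by the independence built into Theorem~\ref{thm:rep_t1}) and that the $\eta_j$ with $j\notin J$ can be frozen without affecting the antichain property in the $J$-coordinates (they can, since \eqref{condF} is a statement for every $\bu\in\R^N$, so the frozen coordinates only shift the relevant function of $(\eta_j)_{j\in J}$). A secondary, purely technical point is the large-deviation step: since the indicators $\mathbf 1\{\delta_j(t_j)>x_+-x_-\}$ have parameters that are merely $\ge r$ rather than exactly $r$, one should either use a Chernoff bound that is monotone in the success probabilities or stochastically dominate from below by i.i.d.\ Bernoulli$(r)$ variables; either way the event $\{|J(\bt)|<rN/2\}$ has probability $\le e^{-cN}$ with $c=c(r)>0$, which is $o(N^{-1/2})$ and hence harmlessly absorbed. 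Everything else is the routine optimization of the binomial maximum already carried out inside Lemmas~\ref{lemprobsp} and \ref{lem:extsp}.
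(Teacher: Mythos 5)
Your proposal is correct and follows essentially the same route as the paper's proof: the decomposition of Theorem~\ref{thm:rep_t1} at $p=p_+/(p_++p_-)$ together with \eqref{eq:JB}, conditioning on $\bt$ and freezing $\eta_j$ for $j\notin J_{\bt}$, the antichain observation from \eqref{condF}, a probabilistic Sperner bound, and the large-deviation control of $|J_{\bt}|$. The only (minor) difference is that the paper invokes Lemma~\ref{lemprobsp} directly with $\sigma_\eta=\sqrt{p(1-p)}$ --- which is what your final arithmetic implicitly uses --- rather than Lemma~\ref{lem:extsp} with $\alpha=\min\{p,1-p\}$, whose constant would be slightly worse when $p_+\neq p_-$.
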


\begin{proof}
    We start by selecting $p\in (0,1)$ by the condition $ p = \frac {p_+}{p_+  +  p_-} $.
 Next, we represent the variables
$\{X_j\}$  using Theorem~\ref{thm:rep_t1}:
\begin{equation}
     \boldsymbol{X} \ \stackrel{\mathcal{D}}{=} \
\boldsymbol{Y}(\boldsymbol{t}) +
\boldsymbol{\delta}(\boldsymbol{t})\,  \boldsymbol{\eta} :=
     \left( Y_{p,1}(t_1) + \delta_{p,1}^-(t_1) \, \eta_1 , \ldots ,  Y_{p,N}(t_N) +
\delta_{p,N}^-(t_N) \, \eta_N \right) \, ,
\end{equation}
with   $ \boldsymbol{\eta} = (\eta_1 , \ldots , \eta_N ) $ a
collection of iid Bernoulli variables
taking values $ \{ 0, 1 \} $
with probability $ \{1-p, p \} $.  From \eqref{eq:JB} one may conclude that for all $ j \in \{ 1, \ldots , N \} $:  
\begin{equation}\label{eq:delarge}
     \P_t(\left\{\delta_{p,j}^-(t) \ge x_+ - x_-  \right\})  \geq  p_+ + p_- \, .
\end{equation}

We express the probability of the event $\{Z \in [x , x + \varepsilon]\}$
through first conditioning on the $\{t_j\}$ variables.  For all $x\in
\R$:
\begin{equation}
\P\left(\{Z \in [x , x + \varepsilon]\} \right) =
\E\left[ \P \left( \cA_{\boldsymbol{t}} \,  \big| \, \boldsymbol{t}
    \right) \right]
\end{equation}
where
\begin{equation}\label{def:ac}
\cA_{\boldsymbol{t}} := \left\{ \boldsymbol{\eta} \in \{0,1 \}^N \, : \,
               \Phi(\boldsymbol{Y}(\boldsymbol{t}) +
\boldsymbol{\delta}(\boldsymbol{t})\,  \boldsymbol{\eta}) \in [x,
x+\varepsilon] \right\} \, .
\end{equation}

By virtue of \eqref{condF}, the set $ \cA_{\boldsymbol{t}} $ is an
antichain in its dependence on $ \{ \eta_j \}_{j \in  J_{\boldsymbol{t}} } $
with
$  J_{\boldsymbol{t}} := \left\{ j \, : \, \delta_j(t_j) \geq x_+ - x_- \right\} $.
Lemma~\ref{lemprobsp}
thus yields
\begin{equation}
      \P \left\{ \cA_{\boldsymbol{t}} \,  \big| \, \boldsymbol{t}, \{
\eta_j \}_{j \not\in  J_{\boldsymbol{t}}} \right\}
      \leq \min\left\{ 1 , \frac{\Theta}{\sigma_\eta
\sqrt{|J_{\boldsymbol{t}}}|} \right\}
\end{equation}
   with $ \sigma_\eta = \sqrt{ p (1-p)} $. 
   We conclude  by the large-deviation argument used in the proof of
Lemma~\ref{lem:extsp}. Using \eqref{eq:delarge} the expected value of $ |J_{\boldsymbol{t}}| =
\sum_{j=1}^N 1_{\{  j \, : \, \delta_j(t_j) \geq x_+ - x_- \}} $
is bounded below:
\begin{equation}
     \E\left(|J_{\boldsymbol{t}}|\right) \geq (p_+ + p_-) N \, .
\end{equation}
Therefore $ \{ |J_{\boldsymbol{t}}| \leq \frac 1 2(p_+ + p_-) N  \} $ is a
large deviation event and its probability is exponentially bounded. 
Elementary estimates lead to 
\begin{equation}
\E\left(\min\left\{ 1 , \frac{\Theta}{\sigma_\eta
\sqrt{|J_{\boldsymbol{t}}}|} \right\} \right) \leq\frac{\widetilde
\Theta}{\sqrt{N}}\, \sqrt{\frac{1}{p_+} + \frac{1}{p_-}}\; ,
\end{equation}
with the same constant $ \widetilde \Theta $ as in \eqref{Sper_varied}.
\end{proof}

\begin{remark}
\begin{nummer}
\item  {\em A simpler proof for iid variables.\/}  For iid non-degenerate random variables $X_1, \ldots , X_N$ the theorem has a simpler proof 
using the binary decomposition of Theorem~\ref{thm:rep_t+}; there is no need for the large deviation argument.  The constants in the theorem will then depend on the value of $p$ and its corresponding lower bound in \eqref{infdelta}.
\item {\em The linear case.\/} For linear functions, 
\begin{equation}
 Z = \Phi(X_1,\dots, X_N) = \sum_{j=1}^N X_j \, , 
\end{equation}
concentration inequalities as in \eqref{concest} go back to 
 W. Doeblin, P. L\'evy~\cite{DL36,Doe39}, P. Erd\"os~\cite{Er} (for the Bernoulli case, where it reduces to the Littlewood-Offord problem),  A. N. Kolmogorov \cite{Kol58}, B. A. Rogozin \cite{Rog61}, H. Kesten \cite{Kes69} and C. G. Esseen \cite{Ess68}.
In this case, sharper inequalities than \eqref{concest} are known, e.g. \cite{Rog02}, 
\begin{equation}\label{eq:linconc}
  Q_Z(\varepsilon) \leq \Theta \, \varepsilon \, \Big[ \sum_{j=1}^N \varepsilon_j^2 \, (1- Q_{X_j}(\varepsilon_j)) \Big]^{-1/2}
\end{equation}
where $ \Theta $ is some constant.  
A recent application of the discrete case of the concentration bounds is found in \cite{TV}.   

\item {\em An extension.\/} 
As it  is already true for \eqref{eq:linconc}, the statement of
Theorem~\ref{thmconc} has an immediate extension to functions which
in some variables are monotone increasing and in some are monotone
decreasing, satisfying the natural analog of \eqref{condF}.   For
this extension, one only needs to replace  $p_+ $ and $p_-$ in
\eqref{concest} by $\hat p = \min\{p_+, p_-\}$.

\item {\em Sperner bounds from concentration inequalities.\/}
 In the proof of Theorem~\ref{thmconc}  concentration bounds 
were deduced from the probabilistic Sperner estimate   \eqref{probSpernerpq}.  
For antichains in the multiset 
$S= \{0, 1, \dots, K\}^N$ the implication  can also be
established in the opposite direction.   
For that, one may use the  fact that in such a multiset  
for any antichain 
$\cA$  there is a function $\Phi: S\mapsto \R$ which
satisfies the `representation condition' (in the terminology of
\cite{E})
\beq
\Phi(\boldsymbol{u} +\boldsymbol{e}_j) - \Phi(\boldsymbol{u}) \ \ge \ 1
\eeq
and for which $\Phi(\boldsymbol{u}) =0$ if and only in
$\boldsymbol{u} \in \cA  $.   
\end{nummer}
\end{remark}


\section{An Application to Random Schr\"odinger   Operators}  \label{sec:loc}

As a demonstration of a possible uses of the elementary observations
which are made in this article, let us present the case of spectral
localization under random iid single site potential  for an arbitrary probability 
distribution.

The  (continuum)  Anderson Hamiltonian is  the random Schr\"odinger 
operator given by
\begin{align}\label{Hom}
H_{\bom} =  -\Delta + 
V_{\bom} \quad \text{on} 
\quad
\mathrm{L}^2(\mathbb{R}^d),\\
\intertext{with}
V_{\bom} (x)= 
\sum_{\xi \in \Z^d} \omega_\xi \,  u(x - 
\xi),\label{Vom}
\end{align}
where  
\begin{enumerate} 
\item 
$u(\cdot)$, the single site potential, is a  nonnegative bounded 
measurable function
on $\R^{d}$ with compact support, uniformly 
bounded away from zero in
a neighborhood of the origin,
\item 

$\bom=\{ \omega_\xi \}_{\xi \in
\Z^d}$ is a family of independent 
identically distributed random
variables,  whose  common probability 
distribution $\mu$ satisfies:  $\set{0
,M}\in \supp \mu \subset 
[0,M]$,  for some $M>0$.
\end{enumerate} 

The random operator  $H_{\bom}$  is  a function of $\bom$, and as 
such it is defined  over a probability space which is invariant under 
the ergodic action of the group of $\Z^d$ translations.   The 
induced maps on this operator valued function are implemented by unitary translations.

Ergodicity considerations carry the  implication that there exist fixed 
subsets of $\R$ so 
that the spectrum of the self-adjoint operator $H_{\bom}$, as well as 
its pure point (pp),
absolutely continuous (ac), and singular continuous (sc) components, 
are equal to these fixed sets with probability one 
(c.f.~\cite{P,KS,KM}).
  In the case of the random potential \eqref{Vom}, the positivity of 
$u(\cdot)$ and  the support properties of $\mu$ imply that 
\beq  \label{eq:spectrum}
\sigma(H_{\bom}) \stackrel{as}{=} [0,\infty) \, .  
\eeq 

Although definitions of localization may come in several flavors, they all include (or imply)  spectral localization (i.e., pure point spectrum), as given in the following definition.

\begin{definition}\label{def:sploc}
 A self-adjoint  operator $H$ on $\mathrm{L}^2(\mathbb{R}^d)$ is said to exhibit spectral localization in   a closed interval 
$I \subset \R$
if  $\sigma(H)\cap I \neq\emptyset$ and the corresponding spectral 
projection $P_I(H)$ is given by a countable sum of orthogonal projections on proper eigenspaces. 
\end{definition}

This property is clearly invariant under translations.   The defining condition is equivalent to the requirement that for a spanning set of vectors the spectral measure is pure-point within $I$.   The set of $\bom$ for which this holds  for the random operator $H_{\bom}$ is known  to be measurable.

  In the one-dimensional case the continuous Anderson Hamiltonian has
been long known to exhibit spectral localization in the whole real line
 for any non-degenerate $\mu$, i.e. when the random 
potential is not constant \cite{GMP,DSS}.   In the multidimensional case, 
localization at
the bottom of the spectrum is already known at great, but nevertheless 
not all-inclusive, generality; cf.~\cite{St,Kle,BK} and references therein.
The Bernoulli decomposition presented here allows to prove localization for 
general non-degenerate single site distributions $\mu$.

More explicitly, the simplest  case to deal with, 
  for the different approaches which yield 
proofs of localization,  has  been   when the single site probability distribution  is absolutely continuous with bounded 
derivative.   The absolute continuity  condition can be relaxed 
 to H\"older continuity of $\mu$, 
both in the approach based on the multiscale analysis which was 
 introduced 
 in \cite{FS} and is discussed in ~\cite{Kle}, 
 and in the one based on the fractional moment method 
 of \cite{AM, AENSS}.  (The basis in the former case is an improved 
 analysis of 
 the Wegner estimate, which can be found in~\cite{St,CHK}.)    
 However, techniques relying on the regularity of $\mu$ 
 seem to reach their 
limit with log-H\"older continuity.  In particular, 
until recently  the Bernoulli random potential had been beyond 
the reach of analysis in more than one dimension.
For that extreme 
case, i.e., of $H_{\bom}$  with  $\mu\set{1}=
\mu\set{0}=\frac 1 2$, localization at the bottom 
of the spectrum  was recently proven by
Bourgain and Kenig \cite{BK}.
A crucial step in the analysis  of \cite{BK} is the
estimation of the probabilities of energy resonances  using Sperner's 
Lemma, i.e., the $ p = \tfrac{1}{2} $
version of \eq{probSpernerpq}.

The point which we would like to make here is that the Bernoulli 
decomposition of random variables  enables one to turn 
the latter result of Bourgain and Kenig \cite{BK} into a tool
for a general proof of localization at the edge of the spectrum for
arbitrary non-degenerate $\mu$.   

First, the Bourgain and Kenig \cite{BK} analysis needs to be extended 
to Schr\"odinger operators  which incorporate an additional 
background potential $ U \in L^\infty(\mathbb{R}^d) $,  and for which 
the variances of the Bernoulli terms are uniformly positive, thought 
not necessarily uniform.  More explicitly, the class  is broadened to 
include operators of the form  
\begin{equation}\label{eq:BKH} 
H_{\boldsymbol{\eta}} = - \Delta + U(x) + \sum_{\xi \in 
\mathbb{Z}^d} \eta_\xi \, b_\xi \, u(x-\xi)  \, ,
\end{equation}
where $u(\cdot)$ is as in \eqref{Vom}, satisfying 
the above condition (1), but instead of 
(2):
\begin{enumerate}
\item[(2')] $ \boldsymbol{\eta} = \{ \eta_\xi 
\}_{\xi \in \mathbb{Z}^d} $ are iid Bernoulli random variables taking 
the values $ \{0,1\} $ 
with probabilities $ \{1-p,p\} $, and the 
coefficients $ \{ b_\xi \}_{\xi \in 
\mathbb{Z}^d} $ 
satisfy
\begin{equation}
0 < b_- \leq  b_{\xi} \leq 
b_+ < \infty 
\qquad \mbox{for all $ \xi \in \Z^d$,}
\end{equation} 
\end{enumerate}
and 
\begin{enumerate}
\item[(3)] $U\in L^\infty (\R^d)$ satisfies, for all $ x \in \R^d$: 
$ 0 \le U(x) \le U_+ < \infty$. 
\end{enumerate}
Due to the presence of the background potential $U$ the spectrum of $H_{\boldsymbol{\eta}}$ need not be deterministic, i.e., equal to some  fixed set with probability one.  For our main purpose it would suffice to restrict attention to  $U$ for which the spectrum of $H_{\boldsymbol{\eta}}$  
is almost surely $[0,\infty)$.  Such restriction is not included in the following statement but instead there is a caveat in the conclusion.   

The extended BK  result, whose proof is  presented  in
\cite{GK3},  is:

\begin{theorem}\label{thmBK} 
Given  a function $u(\cdot)$
as above, and: $p\in (0,1)$,  $b_\pm > 0$ and $ U_+ < \infty$, 
there exist $E_0 >0$
such that any
random    operator $H_{\boldsymbol{\eta}} $ of the form  \eqref{eq:BKH},  
satisfying conditions (1), (2') and (3), for otherwise arbitrary external potential $U$,  with probability one, either exhibits  spectral localization in
$[0,E_0]$ or  $\sigma(H_{\boldsymbol{\eta}} ) \cap
[0,E_0] {=}\emptyset$.
\end{theorem}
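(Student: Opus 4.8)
The plan is to extend the multiscale analysis of Bourgain and Kenig \cite{BK} from the translation-invariant Bernoulli Anderson model to the family \eqref{eq:BKH}, carrying along every constant so that it depends only on $u$, $d$, $p$, $b_\pm$ and $U_+$, and not on the particular background $U$ nor on the particular weights $\{b_\xi\}$. The key elementary observation is that $V_{\boldsymbol{\eta}} := U + \sum_{\xi\in\Z^d}\eta_\xi\, b_\xi\, u(\cdot-\xi)$ is, uniformly over $\boldsymbol{\eta}$, $U$ and $\{b_\xi\}$, a nonnegative $L^\infty$ function with $\|V_{\boldsymbol{\eta}}\|_\infty \le V_+ := U_+ + b_+\,\|\sum_{\xi}u(\cdot-\xi)\|_\infty < \infty$; in particular $H_{\boldsymbol{\eta}}\ge 0$. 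Hence all the deterministic ingredients of a multiscale analysis on boxes $\Lambda_L(x)$ with, say, Dirichlet boundary conditions --- a priori interior estimates, Combes--Thomas exponential decay of resolvents outside the spectrum, the finite-volume geometric resolvent inequality, and a generalized eigenfunction expansion --- hold with constants governed by $V_+$, $d$ and $\|u\|_\infty$ alone. For the initial-scale estimate one uses that $U\ge 0$ only raises the quadratic form, hence the spectrum: the Bourgain--Kenig initial estimate --- exponential decay of the resolvent of $H_{\boldsymbol{\eta},\Lambda_{\ell_0}}$ at energies in $[0,E_0]$, for small $E_0>0$ and large $\ell_0$, with probability at least $1-\ell_0^{-\zeta}$ for a suitable $\zeta>0$ --- then carries over uniformly, provided one replaces the coupling by its lower bound $b_-$ wherever a quantitative lower bound on the bumps $b_\xi u(\cdot-\xi)$ is invoked.

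The heart of the matter is the inductive step, which in \cite{BK} substitutes for the unavailable Wegner estimate a quantitative unique continuation principle together with Sperner's lemma. I would run it as follows. First, invoke the Bourgain--Kenig unique continuation estimate for solutions of $(-\Delta+V_{\boldsymbol{\eta}})\psi = E\psi$ on a box with $|E|\le E_0$: since it is formulated for arbitrary $L^\infty$ potentials of norm $\le V_+$, it applies here without change. Second, use it to show that flipping $\boldsymbol{\eta}$ over a suitably chosen subset $S$ of sites in an annular region of the box moves the relevant finite-volume eigenvalues near $E$ by a definite amount --- each flip adds the nonnegative bump $b_\xi u(\cdot-\xi)\ge b_-\, u(\cdot-\xi)$, so the modulus of the eigenvalue motion is governed by $b_-$, while the unique continuation bound prevents the corresponding eigenfunctions from being too small on $S$. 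Third, exactly as in the proof of Theorem~\ref{thmconc}, this monotone and quantitatively strict dependence on $\boldsymbol{\eta}_S$ turns the probability of an ``$E$-resonance'' of the box into a Sperner-type sum, which Lemma~\ref{lemprobsp} --- the $p$-version of Sperner's estimate, generalizing the $p=\tfrac{1}{2}$ input of \cite{BK} --- bounds by $O(|S|^{-1/2})$, a negative power of the scale. Feeding this resonance bound together with the geometric resolvent inequality into the standard two-boxes argument then yields the passage from scale $\ell_k$ to $\ell_{k+1}$, and hence the full multiscale estimate on $[0,E_0]$.

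It remains to deduce spectral localization in the sense of Definition~\ref{def:sploc}. The multiscale estimate, together with a generalized eigenfunction expansion and a Borel--Cantelli argument along a sequence of scales --- as in the bootstrap multiscale analysis used in \cite{GK3} (see also \cite{Kle}) --- has the following \emph{deterministic} consequence for almost every $\boldsymbol{\eta}$: every polynomially bounded solution of $H_{\boldsymbol{\eta}}\psi = E\psi$ with $E\in[0,E_0]$ decays exponentially and therefore lies in $\mathrm{L}^2(\R^d)$. Since the spectral measure of $H_{\boldsymbol{\eta}}$ in $[0,E_0]$ is carried by such generalized eigenvalues, the spectral projection $P_{[0,E_0]}(H_{\boldsymbol{\eta}})$ is a countable orthogonal sum of eigenprojections; if it is nonzero one has spectral localization in $[0,E_0]$, and otherwise $\sigma(H_{\boldsymbol{\eta}})\cap[0,E_0]=\emptyset$ --- which is precisely the asserted dichotomy. (Measurability of this set of $\boldsymbol{\eta}$ was noted after Definition~\ref{def:sploc}.)

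The main obstacle is the second and third points of the inductive step: one must verify that none of the geometric and harmonic-analytic estimates in \cite{BK} --- in particular the quantitative unique continuation bound and the eigenvalue-variation argument built on it --- secretly used translation invariance or the constancy of the coupling, and that all the resulting constants can be taken uniform over backgrounds $U$ with $0\le U\le U_+$ and over weights $\{b_\xi\}\subset[b_-,b_+]$. I expect this to be a matter of careful bookkeeping rather than of new ideas: the unique continuation estimate is insensitive to bounded additive perturbations of the potential, and the Bernoulli combinatorics is untouched by the weights $b_\xi$, which enter only through the uniform bounds $b_\pm$.
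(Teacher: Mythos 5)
You should first be aware that the paper does not actually prove Theorem~\ref{thmBK}: it is imported as an input, with the explicit statement that its proof ``is presented in \cite{GK3}'' (Germinet--Klein, in preparation). So there is no in-paper argument to compare yours against line by line. What the paper does indicate about that proof --- an extension of the Bourgain--Kenig multiscale analysis \cite{BK} to operators of the form \eqref{eq:BKH} with a bounded background $U$, site-dependent couplings $b_\xi\in[b_-,b_+]$, and Bernoulli parameter $p\neq\tfrac12$ handled by the general-$p$ Sperner estimate (the role played here by Lemma~\ref{lemprobsp}) --- is exactly the program you sketch, including the correct reading of the dichotomy in the conclusion (the background $U$ can make the spectrum non-deterministic, so ``localization in $[0,E_0]$ or no spectrum there'' is the right statement, and your treatment of it via the generalized-eigenfunction/Borel--Cantelli step is sound).

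That said, your proposal is a roadmap rather than a proof, and the gap is concentrated precisely where you write ``I expect this to be a matter of careful bookkeeping rather than of new ideas.'' The content of \cite{GK3} is exactly that bookkeeping: verifying that BK's quantitative unique continuation bound, the eigenvalue-variation (free-site) argument built on it, the initial length-scale estimate in the presence of an arbitrary $0\le U\le U_+$, and the Wegner-substitute resonance estimate all hold with constants uniform in $U$ and $\{b_\xi\}$ and survive the passage from $p=\tfrac12$ to general $p$. These are delicate points in \cite{BK} (for instance, the eigenvalue motion under flipping free sites is not ``a definite amount'' in the order-one sense --- it is exponentially small in a fractional power of the scale, and the Sperner step must be run at that resolution), so asserting without verification that none of them ``secretly used translation invariance'' cannot be accepted as a proof of the theorem. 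In short: your strategy coincides with the one the paper attributes to \cite{GK3}, but the theorem's actual proof lives in that separate paper, and your sketch defers to it (or to unperformed checks) at every technically substantive step.
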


Theorem~\ref{thm:rep_t+} allows now to deduce the following general
statement from the above non-trivial Bernoulli result.

\begin{theorem}\label{thm:gen_loc}  
Let   $H_{\boldsymbol{\omega}} = 
- \Delta + V_{\boldsymbol{\omega}} $   be a  Schr\"odinger operator 
with the  random potential given by \eqref{Vom}, satisfying 
the 
above conditions (1) and (2).  
Then for some $E_0>0$  the operator 
$H_{\boldsymbol{\omega}}$, with probability one, 
exhibits spectral localization in  $[0,E_0]$.
\end{theorem}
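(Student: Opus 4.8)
The plan is to deduce Theorem~\ref{thm:gen_loc} from the Bernoulli result of Theorem~\ref{thmBK} by applying the Bernoulli decomposition of Theorem~\ref{thm:rep_t+} independently to each single-site coupling $\omega_\xi$, and then treating the resulting ``$Y_p$-part'' of the potential as a random but uniformly bounded background potential, conditionally on which the operator falls into the class \eqref{eq:BKH}.

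Concretely, I would first use the non-degeneracy of $\mu$ together with part (4) of Theorem~\ref{thm:rep_t+} to fix a value $p\in(0,1)$ for which $b_-:=\beta^+(p,\mu)={\essinf}_{t\in(0,1)}\delta_p^+(t)>0$. Applying the decomposition \eqref{eq:t+} to each $\omega_\xi$ and using that the $\{\omega_\xi\}$ are independent, one obtains, as an equality in distribution of random fields,
\beq
V_{\bom}(x)\ \stackrel{\mathcal D}{=}\ \sum_{\xi\in\Z^d} Y_p(t_\xi)\,u(x-\xi)\ +\ \sum_{\xi\in\Z^d}\delta_p^+(t_\xi)\,\eta_\xi\,u(x-\xi)\,,
\eeq
where $\{t_\xi\}$ are iid uniform on $(0,1)$, $\{\eta_\xi\}$ are iid Bernoulli$(p)$, and the two families are independent. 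Hence $H_{\bom}$ has the same distribution as $-\Delta+U_{\mathbf t}(x)+\sum_{\xi}\eta_\xi\,b_\xi(\mathbf t)\,u(x-\xi)$, where I set $U_{\mathbf t}(x):=\sum_\xi Y_p(t_\xi)u(x-\xi)$ and $b_\xi(\mathbf t):=\delta_p^+(t_\xi)$.

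The point is that, conditionally on $\mathbf t=\{t_\xi\}$, this operator is exactly of the form \eqref{eq:BKH}, with constants uniform in $\mathbf t$. Indeed, since the inverse distribution function $G$ takes values in $\supp\mu\subset[0,M]$, one has $0\le Y_p(\cdot)\le M$ and $0\le\delta_p^+(\cdot)\le M$, so $0<b_-\le b_\xi(\mathbf t)\le b_+:=M$ for all $\xi$, which is condition (2'); the compact support of $u$ bounds the number of $\xi$ contributing to $U_{\mathbf t}$ at any given $x$ by a constant depending only on $d$ and the diameter of $\supp u$, so $0\le U_{\mathbf t}(x)\le U_+<\infty$ uniformly in $\mathbf t$ and $x$, which is condition (3); and condition (1) on $u$ is part of the hypotheses. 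Theorem~\ref{thmBK} then applies with these fixed $p,b_\pm,U_+$ and produces a \emph{single} $E_0>0$, independent of $\mathbf t$, such that for every fixed $\mathbf t$, with $\boldsymbol{\eta}$-probability one, the conditional operator either exhibits spectral localization in $[0,E_0]$ or has spectrum missing $[0,E_0]$.

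It remains to integrate over $\mathbf t$ and to rule out the second alternative. The event that $H_{\bom}$ exhibits spectral localization in $[0,E_0]$ is measurable, and so is its pullback under the (Borel) decomposition map $(t_\xi,\eta_\xi)_\xi\mapsto(Y_p(t_\xi)+\delta_p^+(t_\xi)\eta_\xi)_\xi$; so Fubini's theorem combined with the conditional statement gives that, with full probability, $H_{\bom}$ either localizes in $[0,E_0]$ or satisfies $\sigma(H_{\bom})\cap[0,E_0]=\emptyset$. Since \eqref{eq:spectrum} gives $\sigma(H_{\bom})=[0,\infty)$ almost surely, the second alternative has probability zero, and spectral localization in $[0,E_0]$ follows. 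The step requiring the most care is the uniformity in $\mathbf t$ of the parameters fed into Theorem~\ref{thmBK} — so that one $E_0$ works for all $\mathbf t$ — together with the measurability bookkeeping needed to pass from the conditional, fixed-$\mathbf t$ statement to the unconditional one; the rest is routine.
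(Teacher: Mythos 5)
Your proposal is correct and follows essentially the same route as the paper's own proof: apply the decomposition of Theorem~\ref{thm:rep_t+} sitewise with a $p$ for which $\beta^+(p,\mu)>0$, condition on $\mathbf{t}$ to place the operator in the class \eqref{eq:BKH} with parameters $p$, $b_\pm$, $U_+$ uniform in $\mathbf{t}$, invoke Theorem~\ref{thmBK} to get a single $E_0$, and rule out the empty-spectrum alternative via \eqref{eq:spectrum} and Fubini. The only cosmetic difference is that the paper bounds the background potential by $U_+ = M\,\lVert\sum_\xi u(\cdot-\xi)\rVert_\infty$ rather than by counting overlapping supports, which amounts to the same thing.
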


\begin{proof} 
The  Bernoulli decomposition \eqref{eq:t+} allows to write the 
coefficients in the random potential in the form: 
\begin{equation}\label{Berdecomega}
    \boldsymbol{\omega}\stackrel{\mathcal{D}}{=} \left\{ Y_+(t_\xi) +
\delta_{p}^{+}(t_\xi) \, \eta_\xi \right\}_{\xi \in \mathbb{Z}^d } \, , 
\end{equation}
with 
 $ \boldsymbol{t} =  \{ t_\xi \}_{\xi \in \mathbb{Z}^d} $ a family
of independent random variables which are uniformly  distributed in $ 
(0,1) $, 
 $ Y_+ $ and $ \delta_{p}^{+} $ the  functions defined in
\eqref{eq:delta} in terms of the distribution function of $ \mu $,  
and  $ \boldsymbol{\eta} = \{ \eta_\xi \}_{\xi \in \mathbb{Z}^d } $ 
a  family  of iid Bernoulli variables, independent of  $ \boldsymbol{t}$, which
take values
in $ \{ 0, 1 \} $ with probabilities $ \{ 1-p , p \} $ for some $ p 
\in (0,1) $ such that \eqref{infdelta} holds.
  
As a consequence, the random operator can be written  as: 
\begin{equation}\label{Hteta}
   H_{\boldsymbol{\omega}} \stackrel{\mathcal{D}}{=}   - \Delta +
U_{\boldsymbol{t}} + V_{\boldsymbol{t},\boldsymbol{\eta}} =:
H_{\boldsymbol{t},\boldsymbol{\eta}} 
   \end{equation}
where \begin{equation} 
 U_{\boldsymbol{t}}(x)  := \sum_{\xi \in \mathbb{Z}^d} Y_+(t_\xi)\,
u(x-\xi)  \quad \mbox{and} \quad 
    V_{\boldsymbol{t},\boldsymbol{\eta}}(x)  :=  \sum_{\xi \in
\mathbb{Z}^d} \delta_{p}^{+}(t_\xi) \, \eta_\xi \,  u(x-\xi) \, , 
\end{equation}
and the following bounds hold
\begin{align}
& 0 \le  U_{\boldsymbol{t}}(x) \le U_+:=  
M\,  \Bigg\lVert \sum_{\xi \in\mathbb{Z}^d}   u(\cdot-\xi) \Bigg\rVert_\infty < \infty
 \, ,\notag \\
& 0 < b_-:= \inf_{t \in (0,1)}  \delta_{p}^{+}(t) \le b_+:= M < \infty\, .
\end{align} 
  This 
implies that  when conditioned on the values of 
$ \boldsymbol{t} $ 
the operator $H_{\boldsymbol{t},\boldsymbol{\eta}} $
is of the form 
\eqref{eq:BKH}, with  $p$, $U_+$ and $b_\pm$ independent of  $ \boldsymbol{t} $.   
Thus, by   Theorem~\ref{thmBK} there exists $E_0 >0$ such that when conditioned 
on $ \boldsymbol{t} $  with probability one, 
$H_{\boldsymbol{t},\boldsymbol{\eta}} $
either exhibits  spectral localization or  has no spectrum in $[0,E_0]$.  
However, the latter is excluded (almost surely, also with respect to the conditional 
probability) by \eqref{eq:spectrum} and Fubini.  
 \end{proof}

\begin{remark} 
 In addition to the spectral localization it is also of interest to establish the existence of uniform localization length, i.e., to prove that 
all eigenfunctions $\phi$ of $H_{\bom}$ with eigenvalue in $[0,E_0] $
satisfy 
\beq  \int_{|x-y|\leq  \frac 1 2} | \phi(y) |^2 \, dy
 \le C_{\phi} \, e^{-2|x|/ \ell} \quad \text{for
all $x \in \R^{d}$}\, .
\eeq  
This can be accomplished in the following two ways, for which 
the details are  presented  in \cite{GK3}.   

To establish uniform localization length under the hypotheses of    Theorem~\ref{thm:gen_loc} 
one may use  the 
 Bernoulli decomposition  \eqref{Berdecomega} \emph{before} performing
 the multiscale analysis which is behind the proof of  Theorem~\ref{thmBK}. 
  The multiscale analysis is then executed for the random Schr\"odinger operator $H_{\boldsymbol{t},\boldsymbol{\eta} }$ in \eqref{Hteta},
 in such  a way that all events in the analysis are jointly measurable in $\boldsymbol{t}$ and $\boldsymbol{\eta}$.

An alternative proof of Theorem~\ref{thm:gen_loc}, which yields also uniform localization length,   can be based on the concentration bound of Theorem~\ref{thmconc}.   Namely, the  Bourgain-Kenig proof can be extended to arbitrary single site probability distribution $\mu$, with the probabilities of energy resonance estimated by the concentration bound instead of by Sperner's Lemma as in \cite{BK}  (see \cite{GK3}). 
   \end{remark}




\section*{Acknowledgements}   
We  thank the Oberwolfach center for  hospitality
at a meeting where the four-way collaboration 
started, and the Isaac Newton Institute where some of the work was done.  
We also thank B. Sudakov for an instructive review of   
the recent results in Sperner's theory, S. Molchanov for alerting us to 
relevant references, and M. Cranston for many helpful discussions 
of  results in probability theory.  
This work was supported in parts by the NSF grants DMS-0602360 (MA), DMS-0457474 (AK) and DMS-0701181 (SW), and a Rothschild Fellowship at INI (MA).

\end{document}